\newtheorem{theorem}{Theorem}
\newtheorem{corollary}[theorem]{Corollary}
\newtheorem{lemma}[theorem]{Lemma}
\newtheorem{proposition}[theorem]{Proposition}
\theoremstyle{definition}
\newtheorem{definition}[theorem]{Definition}
\newtheorem*{example*}{Example}
\newtheorem*{remark*}{Remark}
\newcommand{\N}{\mathbb{N}}
\newcommand{\Q}{\mathbb{Q}}
\newcommand{\R}{\mathbb{R}}
\newcommand{\Z}{\mathbb{Z}}
\newcommand{\nequiv}{\mathrel{\not\equiv}}
\newcommand{\colonequal}{\mathrel{\mathop:}=}
\begin{document}

\title{$p$-adic asymptotic properties of constant-recursive sequences}

\author{Eric Rowland}
\thanks{The first author was supported by a Marie Curie Actions COFUND fellowship.}
\address{
	Department of Mathematics \\
	University of Liege \\
	4000 Li\`ege \\
	Belgium
}
\curraddr{
	Department of Mathematics \\
	Hofstra University \\
	Hempstead, NY \\
	USA
}

\author{Reem Yassawi}

\address{
	Department of Mathematics \\
	Trent University \\
	Peterborough, Ontario \\
	Canada
}

\date{November 6, 2016}

\begin{abstract}
In this article we study $p$-adic properties of sequences of integers (or $p$-adic integers) that satisfy a linear recurrence with constant coefficients.
For such a sequence, we give an explicit approximate twisted interpolation to $\Z_p$.
We then use this interpolation for two applications.
The first is that certain subsequences of constant-recursive sequences converge  $p$-adically.
The second is that the density of the residues  modulo $p^\alpha$ attained by a constant-recursive sequence converges, as $\alpha\rightarrow \infty$, to the Haar measure of a certain subset of $\Z_p$.
To illustrate these results, we determine some particular limits for the Fibonacci sequence.
\end{abstract}

\maketitle

\section{Introduction}\label{introduction}

Many integer sequences $s(n)_{n \geq 0}$ that arise in combinatorial and number theoretic settings have the property that $(s(n) \bmod p^\alpha)_{n \geq 0}$ is a $p$-automatic sequence for each $\alpha \geq 0$ \cite{Rowland--Yassawi, Rowland--Zeilberger}.
As $\alpha$ varies, automata that produce these sequences have natural relationships to each other; namely, an automaton for a sequence modulo $p^\alpha$ necessarily contains all information about the sequence modulo smaller powers of $p$.
However, there has not been a satisfactory way of letting $\alpha \to \infty$ and capturing information about all powers $p^\alpha$ simultaneously.
The inverse limit of these automata 
is a {\em profinite automaton}~\cite{Rowland--Yassawi substitutions}.
In this paper we study properties of this profinite automaton for a sequence $s(n)_{n \geq 0}$ satisfying a linear recurrence with constant coefficients, by interpolating subsequences to 
the $p$-adic integers $\Z_p$.
Namely, we are interested in $p$-adic limits of certain subsequences of $s(n)_{n \geq 0}$, as well as the limiting density of attained residues of $s(n)_{n \geq 0}$ modulo powers of $p$.

\begin{figure}
	\includegraphics[scale=.65]{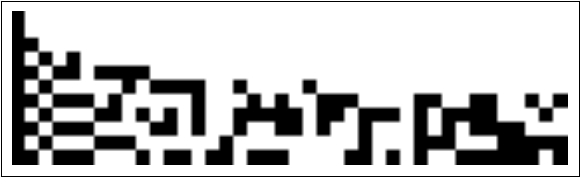} \quad
	\includegraphics[scale=.65]{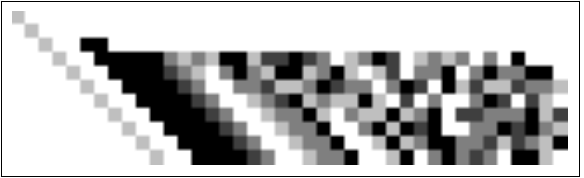} \quad
	\includegraphics[scale=.65]{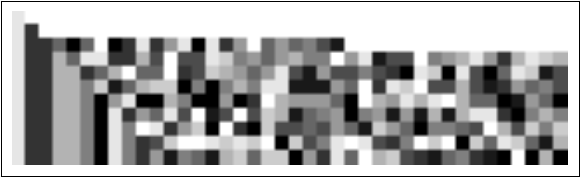}
	\caption{Base-$p$ digits of $F(p^n)$ for $p = 2$ (left), $p = 5$ (center), and $p = 11$ (right), for $n$ in the interval $0 \leq n \leq 10$.}
	\label{Fibonacci limits}
\end{figure}

For example, let $F(n)$ be the $n$th Fibonacci number.
Figure~\ref{Fibonacci limits} shows the first $40$ base-$p$ digits of $F(p^n)$ for $p \in \{2, 5, 11\}$ and $0 \leq n \leq 10$.
The digits $0$ through $p-1$ are rendered in grey levels ranging from white to black.
The $p=2$ array suggests that $\lim_{n \to \infty} F(2^n)$ does not exist in $\Z_2$ but that $\lim_{n \to \infty} F(2^{2 n})$ and $\lim_{n \to \infty} F(2^{2 n + 1})$ do.
The $p=5$ array suggests that $\lim_{n \to \infty} F(5^n) = 0$ in $\Z_5$, and the $p=11$ array suggests that $\lim_{n \to \infty} F(11^n)$ exists in $\Z_{11}$ and is non-zero.
Other limits of this nature appear elsewhere in the literature. For example, analogous limits for binomial coefficients were shown to exist by Davis~\cite{Davis}, and
limits of subsequences of the {\em gyration} sequence were used by Boyle, Lind, and Rudolph~\cite[Section~8]{Boyle--Lind--Rudolph} to obtain information about the automorphism group of a symbolic dynamical system. 

Regarding the Fibonacci sequence, Lenstra~\cite{Lenstra} showed that $F(n)_{n \geq 0}$ can be interpolated by an analytic function on the  {\em profinite integers}. 
For $p \neq 2$, Bihani, Sheppard, and Young~\cite{Bihani--Sheppard--Young} showed that $(a^n F(b n))_{n \geq 0}$ can be interpolated to $\Z_p$ by a hypergeometric function for some integers $a, b$.

A constant-recursive sequence
 cannot generally be interpolated to  $\Z_p$.
Namely, since $\mathbb N$ is dense in $\Z_p$ and $\Z_p$ is compact, a sequence $s(n)_{n\geq 0}$ can be interpolated to $\Z_p$ if and only if $(s(n) \bmod p^\alpha)_{n\geq 0}$ is purely periodic with period length equal to a power of $p$ for every $\alpha$.
However, we show in Theorem \ref{general power series} that
every constant-recursive sequence has an {\em approximate} twisted interpolation to $\Z_p$, as defined in Section~\ref{general sequence}. 
 In Theorem~\ref{converse}, we show that  in general this is the best we can hope for.
  We identify in Corollary~\ref{interpolations exist} a large family of  constant-recursive sequences that have twisted interpolations to $\Z_p$.
Amice and Fresnel give additional characterizations of sequences which have interpolations~\cite[Th\'eor\`eme~1]{Amice--Fresnel} and twisted interpolations~\cite[Section~3.2]{Amice--Fresnel} in terms of the analyticity of their generating functions.

Interpolation of this kind has been used previously to study arithmetic properties of constant-recursive sequences.
For example, the Skolem--Mahler--Lech theorem~\cite[Theorem~2.1]{Recurrence Sequences} for integer-valued constant-recursive sequences can be proved using interpolation.
More recently, Shu and Yao~\cite[Theorem~3]{Shu--Yao} implicitly used interpolation to characterize constant-recursive sequences of order $2$ whose sequence of $p$-adic valuations is $p$-regular. In this article we pay particular attention to the constants one must introduce, which allows us to make explicit the number of functions that comprise the twisted interpolation.

In Section~\ref{roots of unity} we give the necessary background in $p$-adic analysis.
In Section~\ref{general sequence} we discuss twisted interpolations to $\Z_p$ of a sequence satisfying a linear recurrence with constant coefficients.
In Section~\ref{applications} we apply interpolations to the computation of $p$-adic limits and limiting densities of attained residues.
In particular, we show in Theorem~\ref{limiting density} that  the limiting density of attained residues is the Haar measure of a certain set.
In Section~\ref{Fibonacci} we give a twisted interpolation for the Fibonacci sequence to $\Z_p$, we establish the limits suggested by Figure~\ref{Fibonacci limits}, and, in Theorem~\ref{Fibonacci p=11}, we determine the limiting density
of residues attained by the Fibonacci sequence modulo powers of $11$.

\section{Roots of unity in extensions of  $\Q_p$}\label{roots of unity}

We use several results about finite extensions of the field $\Q_p$ of $p$-adic numbers. A complete exposition of the following results can be found in \cite[Chapter 5]{Gouvea}.

If $a_i \in \{0, 1, \dots, p-1\}$ for all $i \geq k$ and $a_k \neq 0$, recall that the $p$-adic absolute value is defined on $\Q_p$ by  $|\sum_{i=k}^\infty a_i p^i|_p = p^{-k}$.

\begin{theorem}\label{background_absolute value}
Let $K/\Q_p$ be a finite extension of degree $d$. For $ x \in K$,
  let $M_ x$ be the matrix that corresponds to multiplication by  $ x$ in $K$, and define the multiplicative function $N_{K/ \Q_p}:K\rightarrow \Q_p$ as
 $N_{K/ \Q_p}( x) \colonequal \det M_ x$. 
\begin{enumerate}
 \item \cite[Corollary 5.3.2 and Theorem 5.3.5]{Gouvea} There is exactly one non-Archimedean absolute  value  $|\,\,|_p$ on $K$ extending the $p$-adic absolute value $|\,\,|_p$ on $\Q_p$, defined as 
 \[   | x|_p\colonequal \sqrt[d]{|    N_{K/\Q_p}( x)|_p}.   \]

 \item \cite[Proposition 5.4.2]{Gouvea} 
 Define the $p$-adic valuation $\nu_p: K\setminus \{ 0\} \rightarrow \Q$  as the unique number satisfying
 \[ | x|_p= p^{-\nu_p(x)}. \]
 Then the image of $\nu_p $  is $\frac{1}{e}\Z$, where $e$ is a divisor of $d$. \end{enumerate}
\end{theorem}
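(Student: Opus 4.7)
The plan is to prove the two parts in order. For part (1), I would first address uniqueness and then existence. Uniqueness follows from a general fact about complete valued fields: any two norms on a finite-dimensional $\Q_p$-vector space are equivalent, so any two non-Archimedean absolute values on $K$ extending $|\,\,|_p$ induce the same topology, hence are powers of one another; since they agree on $\Q_p$ the exponent must be $1$. Existence I would establish by showing that the candidate formula $|x|\colonequal\sqrt[d]{|N_{K/\Q_p}(x)|_p}$ is an absolute value. Multiplicativity is immediate from $N_{K/\Q_p}(xy)=N_{K/\Q_p}(x)N_{K/\Q_p}(y)$, and positive definiteness uses that $N_{K/\Q_p}$ is injective on $K^\times$ up to sign.

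The main obstacle in part (1) is the ultrametric inequality $|x+y|\le\max(|x|,|y|)$. The standard reduction is to show that $|x|\le 1$ implies $|1+x|\le 1$, which in turn reduces to proving that $\{x\in K:|x|\le 1\}$ coincides with the integral closure $\mathcal O_K$ of $\Z_p$ in $K$. I would argue this via minimal polynomials: if $x\in K$ has minimal polynomial $f(T)=T^m+c_{m-1}T^{m-1}+\dots+c_0$ over $\Q_p$, then $N_{K/\Q_p}(x)=(\pm c_0)^{d/m}$, and the key lemma is that $|x|\le 1$ (equivalently $c_0\in\Z_p$) forces all $c_i\in\Z_p$. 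This can be shown using the Newton polygon of $f$ together with irreducibility, or directly by noting that the conjugates of $x$ in an algebraic closure all share the same absolute value (again by uniqueness applied inside each $\Q_p$-embedding) and using the elementary symmetric functions. Once $\mathcal O_K$ is identified as the unit ball, it is closed under addition because it is a ring, giving the ultrametric inequality.

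For part (2), the image $\nu_p(K^\times)$ is a subgroup of $\Q$ since $\nu_p$ is a homomorphism $K^\times\to\Q$. From the defining formula, $d\cdot\nu_p(x)=\nu_p\bigl(N_{K/\Q_p}(x)\bigr)\in\Z$ because $N_{K/\Q_p}(x)\in\Q_p^\times$. Therefore $\nu_p(K^\times)\subseteq \frac{1}{d}\Z$. Since $\nu_p$ already hits all integers via its restriction to $\Q_p^\times$, the image is a subgroup of $\tfrac{1}{d}\Z$ containing $\Z$, and such a subgroup is necessarily of the form $\tfrac{1}{e}\Z$ for some positive integer $e$ dividing $d$.

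I expect part (1) uniqueness and part (2) to be short and routine; the real work is establishing the ultrametric inequality in part (1), and in particular the minimal-polynomial lemma. Rather than reproving this lemma from scratch, I would cite the relevant results from Chapter 5 of Gouv\^ea as already referenced, and structure the exposition so that the formula and its two consequences are assembled cleanly from those building blocks.
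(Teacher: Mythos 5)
This theorem is quoted background: the paper gives no proof of its own, citing Gouv\^ea (Corollary 5.3.2, Theorem 5.3.5, Proposition 5.4.2), and your outline is essentially the standard argument found in that reference---uniqueness from equivalence of norms on a finite-dimensional vector space over the complete field $\Q_p$, existence via the norm formula with the ultrametric inequality reduced (through the minimal-polynomial lemma) to identifying the unit ball with the integral closure of $\Z_p$ in $K$, and part (2) from the observation that the value group is a subgroup of $\tfrac{1}{d}\Z$ containing $\Z$, hence of the form $\tfrac{1}{e}\Z$ with $e \mid d$. One small repair: positive definiteness of $|x|_p = \sqrt[d]{|N_{K/\Q_p}(x)|_p}$ does not rest on any injectivity of $N_{K/\Q_p}$ on $K^\times$ (the norm map is generally far from injective); all that is needed is $N_{K/\Q_p}(x) \neq 0$ for $x \neq 0$, which is immediate since $M_x$ is invertible with inverse $M_{x^{-1}}$. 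With that phrase corrected, your proposal is a faithful reconstruction of the cited proof.
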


The value $e$ in Part (2) of Theorem~\ref{background_absolute value}
 is called the {\em ramification index} of the extension $K/\Q_p$.
 Akin to the special role of $p$ in $\Z_p$, we say $\pi\in K$ is a {\em uniformizer} if 
 $\nu_p(\pi) = 1/e$.

Given an extension $K/\Q_p$ with absolute value $|\,\,|_p$, let
\[ \mathcal O_{ K }\colonequal \{x \in K: |x|_p\leq 1 \} \] denote the unit ball in  $K$,
   and  let 
   \[ \mathcal U_{ K}\colonequal \{x \in K:  |x|_p <1\}\] denote its interior.
   Let $f\colonequal d/e$.

\begin{proposition}[{\cite[Propositions 5.4.5 and 5.4.6]{Gouvea}}]\label{properties}
Let $K/\Q_p$ be a finite extension of degree $d$, with ramification index $e$, and  $f=d/e$. Let $\pi \in K$ be a uniformizer. Then the following hold.
\begin{enumerate}
\item\label{principal}
$\mathcal U_K$ is a principal ideal of $\mathcal O_K$, and $\mathcal U_K = \pi \mathcal O_K$.
\item\label{field}
 The residue field $ \mathcal O_{ K   }   /\mathcal U_K$     is a finite field with $p^f$ elements.
\item\label{monic} If $\beta \in K$ is a root of a monic polynomial with coefficients in $\Z_p$, then $\beta\in \mathcal O_K$.
\item\label{expansion} Let $D=\{0,c_1, \ldots , c_{p^f -1} \} $ be a fixed set of representatives for the cosets of $\mathcal U_K$ in $\mathcal O_{ K   }$.
Then any $x\in  K$ has  a unique expansion $x= \sum_{j=-k}^{\infty} a_j \pi^{j}$ with each $a_i\in D$. 
\end{enumerate}
\end{proposition}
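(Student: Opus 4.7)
The plan is to prove the four parts in order, each exploiting the valuation structure of $K$ provided by Theorem~\ref{background_absolute value}. For Part~\ref{principal}, I would first verify that $\mathcal U_K$ is an ideal of $\mathcal O_K$ by combining the strong triangle inequality $|x+y|_p \leq \max(|x|_p, |y|_p)$ with the multiplicativity of $|\,\,|_p$. Because $\nu_p$ takes values in $\tfrac{1}{e}\Z$ and $\nu_p(\pi) = 1/e$, the condition $\nu_p(x) > 0$ is equivalent to $\nu_p(x/\pi) \geq 0$, so $\mathcal U_K = \pi \mathcal O_K$.

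Part~\ref{field} is the most substantial, and I expect its central step to be the main obstacle. I would first observe that $\mathcal O_K/\mathcal U_K$ is a field, because any $x \in \mathcal O_K \setminus \mathcal U_K$ has $\nu_p(x) = 0$, forcing $\nu_p(x^{-1}) = 0$ and hence $x^{-1} \in \mathcal O_K$. To count elements, I would invoke the standard fact---relying on non-degeneracy of the trace pairing on $K/\Q_p$---that $\mathcal O_K$ is a free $\Z_p$-module of rank $d$, which gives $|\mathcal O_K/p\mathcal O_K| = p^d$. Since $\nu_p(p) = 1 = e \cdot \nu_p(\pi)$, the element $p/\pi^e$ is a unit in $\mathcal O_K$, so $p\mathcal O_K = \pi^e \mathcal O_K$. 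The descending chain
\[ \mathcal O_K \supset \pi \mathcal O_K \supset \pi^2 \mathcal O_K \supset \cdots \supset \pi^e \mathcal O_K = p \mathcal O_K \]
has successive quotients each isomorphic to $\mathcal O_K/\pi \mathcal O_K$ via multiplication by the appropriate power of $\pi$, so $p^d = |\mathcal O_K/\pi \mathcal O_K|^e$ and hence $|\mathcal O_K/\mathcal U_K| = p^{d/e} = p^f$.

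Part~\ref{monic} is the classical ultrametric observation: if $\beta^n + a_{n-1}\beta^{n-1} + \cdots + a_0 = 0$ with $a_i \in \Z_p$ and $|\beta|_p > 1$, then dividing through by $\beta^n$ expresses $-1$ as a sum of $n$ terms each of $p$-adic absolute value strictly less than $1$, contradicting $|{-1}|_p = 1$. For Part~\ref{expansion}, I would build the expansion greedily in the style of the usual $p$-adic construction. Given nonzero $x \in K$ with $\nu_p(x) = -k/e$, the element $\pi^k x$ is a unit in $\mathcal O_K$, so it has a unique representative $a_{-k} \in D \setminus \{0\}$ modulo $\mathcal U_K$; writing $\pi^k x = a_{-k} + \pi y_1$ with $y_1 \in \mathcal O_K$ and iterating the same reduction on $y_1$ yields $a_{-k+1}, a_{-k+2}, \ldots \in D$. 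The partial sums $\sum_{j=-k}^N a_j \pi^j$ differ from $x$ by an element of valuation at least $(N+1)/e$, so they converge to $x$. Uniqueness follows by taking the smallest index at which two putative expansions could differ and reducing modulo an appropriate power of $\pi$ to derive a contradiction within $D$.
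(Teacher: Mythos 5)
Your proposal is correct, but note that the paper itself gives no proof of this proposition at all: it is quoted verbatim, with citation, from Gouv\^ea's book (Propositions 5.4.5 and 5.4.6), and serves purely as background. What you have written is essentially the standard textbook development that the cited source carries out, so there is no conflict with the paper --- you have simply supplied the argument the authors chose to import. A few remarks on the substance. Parts (1), (3), and (4) are routine and your arguments are sound; in (4) the uniqueness step, though stated tersely, is the right one (the first index at which two expansions differ would force two distinct elements of $D$ to be congruent modulo $\pi$, contradicting that $D$ contains exactly one representative per coset of $\mathcal U_K$). Part (2) is where the real content lies, and your filtration argument $\mathcal O_K \supset \pi\mathcal O_K \supset \cdots \supset \pi^e\mathcal O_K = p\mathcal O_K$ together with $|\mathcal O_K/p\mathcal O_K| = p^d$ is exactly the classical proof of the identity $ef = d$; since the paper \emph{defines} $f \colonequal d/e$, this is precisely what is needed to conclude that the residue field has $p^f$ elements. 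The one genuinely nontrivial ingredient you import without proof is that $\mathcal O_K$ is a free $\Z_p$-module of rank $d$ (via the trace-pairing sandwich between two full-rank lattices and the fact that $\Z_p$ is a PID); you flag this explicitly, which is appropriate --- it is the same external input the textbook treatment relies on, and proving it from scratch would roughly double the length of the argument without adding anything relevant to the paper's use of the proposition.
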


Part~\eqref{expansion} of Proposition~\ref{properties} indicates that elements of $K$ have a structure analogous to those of $\Q_p$, with $\pi$ playing the role of $p$.

Given an  extension $K/\Q_p$, the $p$-adic logarithm
\[
	\log_p x \colonequal \sum_{m \geq 1} (-1)^{m + 1} \frac{(x-1)^m}{m}
\]
converges for  $x \in 1 + \mathcal U_K$, i.e.\ for 
$x$ belonging to $\{ x \in \mathcal O_K: |x-1|_p<1    \}$.
The $p$-adic exponential function
\[
	\exp_p x \colonequal \sum_{m \geq 0} \frac{x^m}{m!}
\]
converges for $x$ belonging to $\{x \in \mathcal O_K : |x|_p < p^{-1/(p-1)}\}$.
If $|x-1|_p < p^{-1/(p-1)}$ then 
\[
	x = \exp_p \log_p x.
\]
For details, see \cite[Section~5.5]{Gouvea}.

The next proposition guarantees the existence of certain roots of unity in $\mathcal O_K$.

 \begin{proposition}[{\cite[Corollary 5.4.9]{Gouvea}}]\label{enough roots}
Let $K/\Q_p$ be a finite extension of degree $d$, with ramification index $e$, and  $f=d/e$.
Then 
$\mathcal O_{ K  }^{\times}$ contains the cyclic group of $(p^f-1)$-st roots of unity.
  \end{proposition}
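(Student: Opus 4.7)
The plan is to exhibit $p^f - 1$ distinct roots of the polynomial $g(x) = x^{p^f-1} - 1$ inside $\mathcal{O}_K$ and then observe that these roots automatically form a cyclic group under multiplication.

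First I would work in the residue field $k \colonequal \mathcal{O}_K / \mathcal{U}_K$, which by Proposition~\ref{properties}\eqref{field} is a finite field of cardinality $p^f$. Every nonzero element $\bar{a}$ of $k$ satisfies $\bar{a}^{p^f - 1} = \bar{1}$, so the reduction $\bar{g} \in k[x]$ splits as $\prod_{\bar{a} \in k^\times}(x - \bar{a})$, exhibiting $p^f - 1$ distinct simple roots in $k$. This is the starting input for the lifting step.

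Next I would invoke Hensel's lemma to lift each $\bar{a} \in k^\times$ to a genuine root of $g$ in $\mathcal{O}_K$. For any representative $a \in \mathcal{O}_K$ with $a \equiv \bar{a} \pmod{\mathcal{U}_K}$ we have $|a|_p = 1$, and $|g(a)|_p < 1$, while the derivative $g'(a) = (p^f - 1) a^{p^f - 2}$ has absolute value $1$; indeed $\gcd(p^f - 1, p) = 1$ forces $|p^f - 1|_p = 1$. The hypotheses of Hensel's lemma are therefore met, producing a unique $\alpha \in \mathcal{O}_K$ with $\alpha \equiv a \pmod{\mathcal{U}_K}$ and $g(\alpha) = 0$. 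Distinct cosets in $k^\times$ manifestly yield distinct $\alpha$, so this procedure produces $p^f - 1$ distinct $(p^f - 1)$-st roots of unity in $\mathcal{O}_K^\times$.

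The final step is to check cyclicity. The roots of $g$ in $K$ form a finite subgroup of the multiplicative group $K^\times$, and every finite subgroup of the multiplicative group of a field is cyclic; the $p^f - 1$ roots we have found must therefore constitute the full cyclic group of $(p^f - 1)$-st roots of unity in $\mathcal{O}_K^\times$. The only step requiring real care is the non-vanishing of $g'$ modulo $\mathcal{U}_K$, which as noted follows immediately from coprimality of $p^f - 1$ and $p$; everything else is a textbook application of Hensel's lemma combined with the standard fact about finite subgroups of a field.
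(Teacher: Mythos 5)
Your argument is correct and is essentially the standard proof cited in the paper (Gouv\^ea, Corollary 5.4.9), which the paper itself describes as resting on the appropriate version of Hensel's lemma: lift the $p^f-1$ simple roots of $x^{p^f-1}-1$ from the residue field, using that $|p^f-1|_p=1$, and conclude cyclicity from the fact that a finite subgroup of $K^\times$ is cyclic. No gaps to report.
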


  The proof of Proposition~\ref{enough roots} involves the appropriate version of Hensel's lemma, and in particular it implies that each $(p^f-1)$-st root of unity belongs to a distinct residue class modulo $\pi$.
Since there are precisely $p^f$ residue classes modulo $\pi$, it follows that, for each $x \in \mathcal O_K$ such that $x \nequiv 0 \mod \pi$, there is a unique $(p^f-1)$-st root of unity congruent to $x$ modulo $\pi$; we define $\omega(x)$ to be this  root of unity. Note that $\omega(x)$ is independent of the choice of uniformizer.
For $p \neq 2$ and a $p$-adic integer $ x \in \Z_p \setminus p \Z_p$, $\omega( x)$ coincides with the \emph{Teichm\"uller representative} of $ x$, the $(p-1)$-st root of unity congruent to $ x$ modulo $p$.

\section{Interpolation of a constant-recursive sequence}\label{general sequence}
Let $\mathbb N= \{0,1,\ldots\}$ be the set of natural numbers.
Let $s(n)_{n \geq 0}$ be a sequence of $p$-adic integers satisfying a linear recurrence
\begin{equation}\label{linear recurrence}
	s(n + \ell) + a_{\ell  -1}s(n+\ell -1) + \dots + a_1 s(n + 1) + a_0 s(n) = 0
\end{equation}
with constant coefficients $a_i \in \Z_p$.
As discussed in Section~\ref{introduction}, in general $s(n)$ cannot be interpolated to $\Z_p$.

\begin{definition}\label{definition}
Let $p$ be a prime, and let  $q \geq 1$ be a power of $p$.
Let $s(n)_{n\geq 0}$ be a sequence of $p$-adic integers.
Suppose $\mathbb N= \bigcup_{j\in J} A_j$ is a finite partition of $\mathbb N$,
  with each $A_j$ dense in $r+q \Z_p$ for some $0\leq r\leq q -1$.
 Let  $K$ be a finite extension of $\Q_p$, and for each $j \in J$ let $s_j:\Z_p\rightarrow K$ be a continuous function.
\begin{itemize}
\item
 If $s(n) = s_j(n)$ for all $n\in A_j$ and $j\in J$, then we say that the family $\{(s_j, A_j):  j\in J \}$ is a {\em twisted interpolation} of $s(n)_{n\geq 0}$ to $\Z_p$.
 \item
 If there are non-negative constants $C, D$, with $D<1$,  such that $|s(n)-s_j(n)|_p\leq C D^n$  for all $n\in A_j$ and $j\in J$, then we say that $\{(s_j , A_j): j \in J\}$ is an {\em approximate twisted interpolation} of $s(n)_{n\geq 0}$ to $\Z_p$.
\end{itemize}
\end{definition}

In the case of a twisted interpolation, since $A_j$ is dense in $r+q \Z_p$, the function $s_j(x)$ is  the unique continuous function which agrees  with $s(n)$ on $A_j$.
Note that some authors refer to each of the functions $s_j$ as a twisted interpolation.  If all the functions $s_j$ are the same then we have an {\em interpolation}.
In this section we identify conditions that guarantee the existence of a twisted interpolation of $s(n)_{n\geq 0}$ to $\Z_p$. 
If  $s(n)_{n\geq 0}$ does not satisfy these conditions, we show that it can only  be approximately interpolated. 
The sets $A_j$ we will obtain are all of the form
\begin{equation}\label{special_sets}
	A_{i,r}\colonequal \{m \geq 0 : \text{$m \equiv i \mod p^{f}-1$ and $m \equiv r \mod q $}\}
\end{equation}
for some fixed $f$.
The  proof of the following lemma  follows directly from the Chinese remainder theorem.

\begin{lemma}\label{dense} Let $p$ be a prime, let $q \geq 1$ be a power of $p$, and let $f\geq 1$.
For each  $0\leq i \leq p^{f}-2$ and $0\leq r\leq q-1$, the set $A_{i,r}$
is dense in  $r+q \Z_p$.

 \end{lemma}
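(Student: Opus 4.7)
The plan is to unwind the definition of density in $r+q\Z_p$ and reduce the claim to a single application of the Chinese remainder theorem to the moduli $p^f-1$ and $p^\alpha$.

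First I would recall that a subset $A \subseteq r+q\Z_p$ is dense in $r+q\Z_p$ precisely when, for every $x \in r+q\Z_p$ and every integer $\alpha \geq 0$, there exists $m \in A$ with $|m-x|_p \leq p^{-\alpha}$, i.e.\ $m \equiv x \pmod{p^\alpha}$. Hence, to establish the lemma, I need to show: given $x \in r+q\Z_p$ and $\alpha \geq 0$, one can find a nonnegative integer $m$ satisfying
\[
  m \equiv i \pmod{p^f-1}, \qquad m \equiv r \pmod{q}, \qquad m \equiv x \pmod{p^\alpha}.
\]
Without loss of generality I may assume $p^\alpha \geq q$, since enlarging $\alpha$ only strengthens the conclusion. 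Under this assumption, the congruence $m \equiv x \pmod{p^\alpha}$ already forces $m \equiv x \equiv r \pmod{q}$ because $x \in r+q\Z_p$ and $q \mid p^\alpha$. Thus the middle condition is redundant and only two congruences remain.

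Next I would invoke the key arithmetic fact that $\gcd(p^f-1, p^\alpha) = 1$, which holds because $q$ (and hence $p^\alpha$) is a power of $p$ while $p^f-1$ is coprime to $p$. By the Chinese remainder theorem, the system
\[
  m \equiv i \pmod{p^f-1}, \qquad m \equiv x \pmod{p^\alpha}
\]
has a unique solution modulo $(p^f-1)p^\alpha$, and I may choose the representative $m$ in $\{0, 1, \dots, (p^f-1)p^\alpha - 1\}$, so in particular $m \in \mathbb{N}$. This $m$ lies in $A_{i,r}$ and satisfies $|m-x|_p \leq p^{-\alpha}$, establishing density.

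There is no real obstacle here; the statement is essentially a bookkeeping exercise. The only point that requires a moment of care is verifying that the congruence modulo $q$ is implied by the congruence modulo $p^\alpha$ once $\alpha$ is taken large enough, which is why the partition sets $A_{i,r}$ were defined with $q$ a power of $p$ rather than an arbitrary modulus.
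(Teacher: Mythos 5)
Your proof is correct and follows exactly the route the paper intends: the paper gives no details beyond noting that the lemma ``follows directly from the Chinese remainder theorem,'' and your argument is precisely that reduction, with the (correct) observation that the congruence modulo $q$ is absorbed by the congruence modulo $p^\alpha$ once $p^\alpha \geq q$, and that $\gcd(p^f-1,p^\alpha)=1$ makes the remaining system solvable in $\mathbb{N}$.
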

 
 We recall the classical interpolation of the Fibonacci numbers to $\mathbb R$.
Let $\phi = \frac{1 + \sqrt{5}}{2}$ and $\bar{\phi} = \frac{1 - \sqrt{5}}{2}$.
Using the generating function of the Fibonacci sequence,
the $n$th Fibonacci number $F(n)$ can be written using Binet's formula
 \[
	F(n)
	= \frac{\phi^n - \bar{\phi}^n}{\sqrt{5}}.
\]
Thus to obtain an interpolation of $F(n)_{n\geq 0}$ to $ \mathbb R$, it suffices to interpret
\[
	\frac{\phi^x - \bar{\phi}^x}{\sqrt{5}}
\]
for $x \in \R$.
We can write $\phi^n = (\exp \log \phi)^n = \exp (n \log \phi)$ since $\phi$ is positive, and  
it follows that $\phi^n$ is interpolated by $\exp (x \log \phi)$.

Because $\bar{\phi}$ is negative, we write 
\[
	\bar{\phi}^n = (-1)^n (-\bar{\phi})^n = (-1)^n (\exp \log(-\bar{\phi}))^n = (-1)^n \exp (n \log(-\bar{\phi})),
\]
and it remains to interpolate $(-1)^n$ to $\R$.
A common, but not unique, choice is $\cos(\pi x)$.
 Therefore $F(n)_{n\geq 0}$ is interpolated to $\R$ by the analytic function
\[
	F(x)=\frac{\exp (x \log \phi) - \cos(\pi x) \exp (x \log (-\bar{\phi}))}{\sqrt{5}}.
\]

 The main idea of this section  is to carry out such an interpolation to $\Q_p$ instead of $\R$. 
The $n$th term of the constant-recursive sequence $s(n)_{n \geq 0}$ can be written as a linear combination of terms of the form $n^j \beta^n$ in a suitable field extension, where $\beta$ runs over the roots of the characteristic polynomial $g(x) = x^\ell + \dots + a_1 x + a_0$, and $j$ runs over the integers from $0$ to $m_\beta - 1$, where $m_\beta$ is the multiplicity of $\beta$. In other words, we can write
\[ s(n) =  \sum_{\beta} c_\beta(n) \beta^n  \]
for some polynomials $c_\beta(x) \in K[x]$, where we sum over all roots $\beta$ of $g(x)$. The key step  is to be able to legitimately write $x=\exp_p \log_p x$ for some modified version of the roots $\beta$. Lemma~\ref{general domain} tells us how to do this.

\begin{lemma}\label{general domain}
Let $p$ be a prime.
Let $g(x) \in \Z_p[x]$ be a monic polynomial, and let $\beta$ be a root of $g(x)$ in a splitting field $K$ of $g(x)$ over $\Q_p$, with $|\beta|_p = 1$.
Let $e$ be the ramification index of $K/\Q_p$.
Let
\begin{equation}\label{definition of q}
	q =
	\begin{cases}
		1					& \text{if $e < p - 1$} \\
		p^{\lceil \log_p(e+1) \rceil}	& \text{if $e \geq p - 1$,}
	\end{cases}
\end{equation}
where here $\log_p$ is the real logarithm to base $p$.
Then $|(\frac{\beta}{\omega(\beta)})^q - 1|_p < p^{-1/(p-1)}$.
\end{lemma}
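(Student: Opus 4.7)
The plan is to set $\gamma \colonequal \beta/\omega(\beta)$ and reduce the claim to a valuation estimate on $\gamma^q - 1$. First I would note that by the defining property of $\omega$ stated just after Proposition~\ref{enough roots}, $\beta \equiv \omega(\beta) \bmod \pi$. Since $\omega(\beta) \in \mathcal O_K^\times$, dividing gives $\gamma - 1 \in \pi \mathcal O_K$, so $|\gamma - 1|_p \leq p^{-1/e}$, equivalently $\nu_p(\gamma - 1) \geq 1/e$.

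When $e < p-1$ we have $q = 1$, so $|\gamma^q - 1|_p = |\gamma - 1|_p \leq p^{-1/e} < p^{-1/(p-1)}$, and there is nothing further to do.

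The main case is $e \geq p-1$, in which $q = p^t$ with $t = \lceil \log_p(e+1) \rceil$, so that $p^t \geq e+1 > e$. Here I would expand via the binomial theorem,
\[
	\gamma^q - 1 \;=\; \sum_{k=1}^{p^t} \binom{p^t}{k}\, (\gamma - 1)^k,
\]
and invoke the ultrametric inequality. By Kummer's theorem, $\nu_p\bigl(\binom{p^t}{k}\bigr) = t - \nu_p(k)$ for $1 \leq k \leq p^t$. Writing $j = \nu_p(k)$ (so $0 \leq j \leq t$ and $k \geq p^j$), each term satisfies
\[
	\nu_p\!\left(\binom{p^t}{k}(\gamma-1)^k\right) \;\geq\; (t - j) + \frac{k}{e} \;\geq\; (t-j) + \frac{p^j}{e}.
\]
The proof then reduces to showing $(t-j) + p^j/e > 1/(p-1)$ for every $0 \leq j \leq t$, equivalently $e(p-1)(t-j) + (p-1)p^j > e$.

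I expect this last inequality to be the main obstacle, though it yields to a clean case split. When $j = t$ the inequality becomes $(p-1)p^t > e$, which follows from $p^t > e$ and $p - 1 \geq 1$. When $j < t$ we have $t - j \geq 1$ and $p - 1 \geq 1$, so $e(p-1)(t-j) \geq e$; the strictly positive $(p-1)p^j$ term then pushes the sum strictly past $e$. This yields $\nu_p(\gamma^q - 1) > 1/(p-1)$, i.e.\ $|\gamma^q - 1|_p < p^{-1/(p-1)}$, as required.
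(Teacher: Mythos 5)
Your proof is correct and follows essentially the same route as the paper: the same case split on $e < p-1$ versus $e \geq p-1$, the same use of $q = p^t \geq e+1$, and the same reliance on the $p$-divisibility of the binomial coefficients $\binom{p^t}{k}$ --- you simply carry out explicitly the binomial expansion that the paper compresses into its appeal to Kummer's theorem yielding $\beta^q \equiv \omega(\beta)^q \bmod p\pi$. Your per-term estimates in fact recover the paper's sharper bound $\nu_p\bigl((\beta/\omega(\beta))^q - 1\bigr) \geq 1 + \frac{1}{e}$, even though you only compare each term against the threshold $\frac{1}{p-1}$.
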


\begin{proof}
Let $\pi$ be a uniformizer.
Since $|\beta|_p=1$, by Proposition~\ref{enough roots} there exists a root of unity $\omega(\beta) \in \mathcal O_K$ which is congruent to $\beta$ modulo $\pi$.
We have
\[
	\left| \frac{\beta}{\omega(\beta)} - 1 \right|_p
	= \frac{|\beta - \omega(\beta)|_p}{|\omega(\beta)|_p}
	= |\beta - \omega(\beta)|_p.
\]

First suppose $e < p - 1$, so that $q = 1$.
Since $\beta \equiv \omega(\beta) \mod \pi$, we have
\[
	|\beta - \omega(\beta)|_p \leq  |\pi|_p = p^{-1/e} < p^{-1/(p-1)}
\]
as desired.
Now suppose $e \geq p - 1$, so that $q = p^{\lceil \log_p(e+1) \rceil}$.
Since $\log_p(e+1) \leq \lceil \log_p(e+1) \rceil$, our choice of $q$ implies $\pi^q \equiv 0 \mod p \pi$. 
Since $\beta \equiv \omega(\beta) \mod \pi$, Kummer's theorem then implies $\beta^q \equiv \omega(\beta)^q \mod p \pi$, so
\[
	|\beta^q - \omega(\beta)^q|_p \leq  |p \pi|_p = p^{-1 - 1/e} < p^{-1/(p-1)}
\]
since $1 + \frac{1}{e} > 1 \geq \frac{1}{p - 1}$.
\end{proof}

We make two remarks regarding Lemma~\ref{general domain}.
The first is that the case $e \geq p-1$ in Equation~\eqref{definition of q} does occur. For example, 
consider the sequence defined by $s(n + 3) = 2 s(n)$ and $s(0) = s(1) = s(2) = 1$.
Let $p = 3$. Then $K= \Q_3(\sqrt[3] 2)$,
$e = 3 \geq 2 = p-1$, and  $q=9$.

The second remark is that the value of $q$ given by Lemma~\ref{general domain} is not necessarily optimal.
For example, the extension $K = \Q_2$ of degree $1$ contains the square root of unity $-1$.
This root of unity is not included in those guaranteed by Proposition~\ref{enough roots}, but allowing $\beta \in \Z_2 \setminus 2 \Z_2$ to be divided by $1$ or $-1$ allows us to reduce the value of $q$ from $2$ to $1$.

We now state the main result of this section.

\begin{theorem}\label{general power series}
Let $p$ be a prime, and 
let $s(n)_{n \geq 0}$ be a constant-recursive sequence of $p$-adic integers with monic characteristic polynomial $g(x) \in \Z_p[x]$.
Then there exists an analytic approximate twisted interpolation of $s(n)_{n\geq 0}$ to $\Z_p$.
\end{theorem}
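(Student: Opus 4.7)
The plan is to start from the closed form $s(n) = \sum_{\beta} c_\beta(n)\beta^n$, obtained by a Jordan-type decomposition of the recurrence in the splitting field $K$ of $g(x)$ over $\Q_p$, where $\beta$ ranges over the roots of $g(x)$ and each $c_\beta(x) \in K[x]$ is a polynomial of degree less than the multiplicity of $\beta$. Since $g(x)$ is monic with coefficients in $\Z_p$, part~\ref{monic} of Proposition~\ref{properties} forces every root $\beta$ to lie in $\mathcal{O}_K$, so $|\beta|_p \leq 1$. I would partition the roots into the unit roots $U = \{\beta : |\beta|_p = 1\}$ and the topologically nilpotent ones $T = \{\beta : |\beta|_p < 1\}$; the $\beta \in U$ will be interpolated, while the $\beta \in T$ will be absorbed into the error term. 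This last step is easy: $|c_\beta(n)|_p$ is uniformly bounded on $\N$ because $c_\beta$ is a fixed polynomial evaluated at $n$ with $|n|_p \leq 1$, so
\[ \Bigl| \sum_{\beta \in T} c_\beta(n)\beta^n \Bigr|_p \leq C D^n \]
for a constant $C \geq 0$ and $D = \max_{\beta \in T} |\beta|_p < 1$ (the sum being identically zero when $T = \emptyset$).

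For the unit roots, let $e$ and $f$ be the ramification index and residue degree of $K/\Q_p$, and apply Lemma~\ref{general domain} to each $\beta \in U$, taking $q$ large enough to work simultaneously for all of them. Using the factorization $\beta^n = \omega(\beta)^n \cdot (\beta/\omega(\beta))^n$ together with $\omega(\beta)^{p^f - 1} = 1$ from Proposition~\ref{enough roots}, I would partition $\N$ by the sets $A_{i,r}$ of~\eqref{special_sets}, which are dense in $r + q\Z_p$ by Lemma~\ref{dense}. On $A_{i,r}$ the factor $\omega(\beta)^n$ collapses to the constant $\omega(\beta)^i$, while Lemma~\ref{general domain} allows me to rewrite
\[ (\beta/\omega(\beta))^n = (\beta/\omega(\beta))^r \cdot \exp_p\!\bigl( \tfrac{n-r}{q}\, \log_p\bigl((\beta/\omega(\beta))^q\bigr) \bigr). \]
This leads naturally to the interpolant
\[ s_{i,r}(x) \colonequal \sum_{\beta \in U} c_\beta(x)\, \omega(\beta)^i\, (\beta/\omega(\beta))^r\, \exp_p\!\bigl( \tfrac{x-r}{q}\, \log_p\bigl((\beta/\omega(\beta))^q\bigr) \bigr), \]
initially defined for $x \in r + q\Z_p$ and extended continuously to $\Z_p$ if required by the definition.

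Each $s_{i,r}$ is a finite combination of polynomials with compositions of $p$-adic analytic functions, hence analytic on $r + q\Z_p$: the convergence check is that $\tfrac{x-r}{q} \in \Z_p$ for $x \in r + q\Z_p$, while Lemma~\ref{general domain} guarantees $|\log_p((\beta/\omega(\beta))^q)|_p < p^{-1/(p-1)}$, placing the argument of $\exp_p$ safely inside its disk of convergence. By construction, $s_{i,r}(n) = \sum_{\beta \in U} c_\beta(n)\beta^n$ for $n \in A_{i,r}$, so
\[ |s(n) - s_{i,r}(n)|_p = \Bigl| \sum_{\beta \in T} c_\beta(n)\beta^n \Bigr|_p \leq C D^n, \]
delivering the required analytic approximate twisted interpolation. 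The main obstacle is exactly what Lemma~\ref{general domain} packages: because $\beta/\omega(\beta)$ need not be close enough to $1$, the naive formula $\exp_p(n \log_p(\beta/\omega(\beta)))$ is unavailable, which is precisely what forces the interpolation to live on arithmetic progressions of step $q$ and the root-of-unity factor $\omega(\beta)^n$ to be handled separately via residue classes modulo $p^f - 1$. The rest is bookkeeping: choosing $q$ and $f$ uniformly in $\beta \in U$, and confirming that the topologically nilpotent roots contribute only an exponentially decaying error.
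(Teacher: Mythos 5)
Your proposal is correct and follows essentially the same route as the paper: the same splitting of roots into unit roots and roots of absolute value less than $1$, the same use of Lemma~\ref{general domain} to justify $\exp_p\log_p$ on $q$-th powers, the same partition into the sets $A_{i,r}$, and an interpolant that, after the substitution $x \mapsto qx+r$ and the identity $\omega(\beta)^i(\beta/\omega(\beta))^r=\omega(\beta)^{i-r}\beta^r$, coincides with the paper's $s_{i,r}$, with the small roots absorbed into the same $CD^n$ error term. No substantive differences to report.
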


\begin{proof}
As in Lemma~\ref{general domain}, let $K$ be a degree-$d$ splitting field of $g(x)$ over $\Q_p$ with ramification index $e$,  and $f = d/e$. Let $q$ be defined as in Equation~\eqref{definition of q}. We have
$s(n) = \sum_{\beta} c_\beta(n) \beta^n$ for some $c_\beta(x) \in K[x]$.

We mimic what is done to interpolate  the Fibonacci numbers to $\R$.  
By Proposition~\ref{properties}, all roots of $g(x)$ lie in $\mathcal O_K$.
Let $n \geq 0$ and $0 \leq r \leq q - 1$.
For each root $\beta$ such that $|\beta|_p = 1$, we have
\begin{align*}
	\beta^{q n + r}
	&= \omega(\beta)^{q n} \beta^r \left(\tfrac{\beta}{\omega(\beta)}\right)^{q n}\\
	&= \omega(\beta)^{q n} \beta^r \left(\exp_p \log_p \left(\tfrac{\beta}{\omega(\beta)}\right)^{q n}\right) \\
	&= \omega(\beta)^{q n} \beta^r \exp_p\left(n \log_p \left(\tfrac{\beta}{\omega(\beta)}\right)^q\right)
\end{align*}
by Lemma~\ref{general domain}.
Therefore
\begin{align*}
	s(q n + r)
	&= \sum_\beta c_\beta(q n + r) \beta^{q n + r} \\
	&= \sum_{|\beta|_p <1} c_\beta(q n + r) \beta^{q n + r} + \sum_{|\beta|_p = 1} c_\beta(q n + r) \omega(\beta)^{q n} \beta^r \exp_p\left(n \log_p \left(\tfrac{\beta}{\omega(\beta)}\right)^q\right).
\end{align*}
We discard terms involving $\beta^{q n + r}$ where $|\beta|_p <1$ since these tend to $0$ quickly.
For the remaining terms, we must replace $\omega(\beta)^{q n}$ with a function defined on $\Z_p$.

When $n$ is restricted to a fixed residue class modulo $p^f - 1$, the expression $\omega(\beta)^{q n}$ is constant, and we can now define, for 
each $0 \leq i \leq p^f - 2$ and $0 \leq r \leq q-1$, 
\[
	s_{i,r}(q x + r)
	\colonequal \sum_{|\beta|_p = 1} c_\beta(q x + r) \omega(\beta)^{i - r} \beta^r \exp_p\left(x \log_p \left(\tfrac{\beta}{\omega(\beta)}\right)^q\right)
\]
for $x \in \Z_p$.

Recall $A_{i,r} = \{m \geq 0 : \text{$m \equiv i \mod p^f - 1$ and $m \equiv r \mod q$}\}$ for $0 \leq i \leq p^f - 2$ and $0 \leq r \leq q-1$.  
Then for $m\in A_{i,r}$, $s_{i,r}(m)$ agrees with the second sum in the expression for $s(m)$.

We claim that $\{(s_{i,r}, A_{i,r}) :    0 \leq i \leq p^f - 2$ and $0 \leq r \leq q-1\}$
is an analytic approximate twisted interpolation of $s(n)_{n\geq 0}$ to $\Z_p$. By Lemma~\ref{dense}, each set $A_{i,r}$ has the correct density property.
Since $|\log_p(\tfrac{\beta}{\omega(\beta)})^q|_p< p^{-1/(p-1)}$ for each $\beta$ satisfying $|\beta|_p = 1$, the expression
\[
	\exp_p\left(x \log_p \left(\tfrac{\beta}{\omega(\beta)}\right)^q\right)
\]
is well defined for $x \in \Z_p$.
Therefore the function $x \mapsto s_{i,r}(q x + r)$ is analytic on $\Z_p$.
Since each $c_\beta$ is continuous, and $\Z_p$ is compact, we can define $C = \max_{|\beta|<1 } \max_{x\in \Z_p}|c_\beta(x)|_p$.
Then 
 for $n \in A_{i,r}$ we have
\[
	| s(n) - s_{i,r}(n) |_p
	= \left| \sum_{|\beta|_p <1} c_\beta(n) \beta^{n} \right|_p
	\leq \max_{|\beta|_p < 1} |c_\beta(n) \beta^{n}|_p
	\leq C \left(\max_{|\beta|_p < 1} |\beta|_p\right)^{n},
\]
where we interpret a maximum over the empty set to be $0$, and $0^0$ to be $0$.
\end{proof}

\begin{remark*}
We do not use the fact that the functions $c_\beta$ are polynomials, but only that they are continuous. Hence the proof of Theorem~\ref{general power series} works more generally for any sequence $s(n)_{\geq 0}$ which can be written $s(n) = \sum_{\beta} c_\beta(n) \beta^n$ as a sum over a finite set $B \subset \mathcal O_K$, where the functions $c_\beta$ are arbitrary continuous functions.
\end{remark*}

\begin{example*}
 Let $s(0)=s(1)=s(2) =1$, and $s(n+3)= 3s(n+2)+2s(n+1)-6s(n)$.
Let $p = 2$.
Then the roots of the characteristic polynomial are $3$ and $\pm \sqrt 2$. Because of these last two roots, $s(n)\neq s_{i,r}(n)$, and Theorem~\ref{general power series} gives only an approximate twisted interpolation.
\end{example*}

The proof of Theorem~\ref{general power series} gives us sufficient conditions for a  constant-recursive sequence $s(n)_{n \geq 0}$  to have  an analytic twisted interpolation to $\Z_p$.
If   $g(x)=x^ \ell + \dots + a_1 x + a_0       \in \Z_p[x]$ is a monic characteristic polynomial for $s(n)_{n \geq 0}$, with $s(n) = \sum_{\beta} c_\beta(n) \beta^n$ and 
\[\{\beta: |\beta|_p<1 \mbox{ and } c_\beta \mbox{ is not the 0 polynomial} \}=\emptyset, \] then there exists an analytic twisted interpolation of $s(n)_{n \geq 0}$ to $\Z_p$.
In particular, since, up to a unit,  $a_0=\prod_{\beta}\beta$, we have the following corollary.

\begin{corollary}\label{interpolations exist}
Let $p$ be a prime, and
let $s(n)_{n \geq 0}$ be a constant-recursive sequence of $p$-adic integers with monic characteristic polynomial $x^ \ell + \dots + a_1 x + a_0  
\in \Z_p[x]$. If $|a_0|_p=1$, then there exists a twisted interpolation of $s(n)_{n \geq 0}$ to $\Z_p$. 
\end{corollary}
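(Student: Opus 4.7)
The plan is to observe that the hypothesis $|a_0|_p = 1$ forces every root of the characteristic polynomial to have absolute value exactly $1$, which is precisely the condition that makes the approximate twisted interpolation produced in the proof of Theorem~\ref{general power series} into an exact twisted interpolation.

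First I would recall that by Proposition~\ref{properties}\eqref{monic}, every root $\beta$ of the monic polynomial $g(x) = x^\ell + \dots + a_1 x + a_0 \in \Z_p[x]$ in its splitting field $K$ lies in $\mathcal{O}_K$, so $|\beta|_p \leq 1$. Next I would invoke Vieta's formulas in the form $a_0 = (-1)^\ell \prod_\beta \beta$, where the product runs over the roots of $g(x)$ with multiplicity. Taking the (unique, by Theorem~\ref{background_absolute value}) extended $p$-adic absolute value of both sides gives
\[
    1 = |a_0|_p = \prod_\beta |\beta|_p.
\]
Since each factor satisfies $|\beta|_p \leq 1$ and the product equals $1$, we must have $|\beta|_p = 1$ for every root $\beta$ of $g(x)$.

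With this in hand, I would revisit the proof of Theorem~\ref{general power series}. Writing $s(n) = \sum_\beta c_\beta(n) \beta^n$ with $c_\beta(x) \in K[x]$, the sum over roots with $|\beta|_p < 1$ is empty, so the splitting
\[
    s(qn+r) = \sum_{|\beta|_p < 1} c_\beta(qn+r) \beta^{qn+r} + \sum_{|\beta|_p = 1} c_\beta(qn+r) \omega(\beta)^{qn} \beta^r \exp_p\!\left(n \log_p\!\left(\tfrac{\beta}{\omega(\beta)}\right)^{\!q}\right)
\]
reduces to the second sum alone. Consequently the functions $s_{i,r}$ defined in the proof of Theorem~\ref{general power series} satisfy $s_{i,r}(n) = s(n)$ exactly, not merely up to a term of size $C\bigl(\max_{|\beta|_p < 1} |\beta|_p\bigr)^n$. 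Combined with the density statement from Lemma~\ref{dense}, this shows that $\{(s_{i,r}, A_{i,r}) : 0 \leq i \leq p^f-2,\ 0 \leq r \leq q-1\}$ is a genuine (not merely approximate) analytic twisted interpolation of $s(n)_{n\geq 0}$ to $\Z_p$.

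There is no real obstacle here; the proof is essentially a verification that the hypothesis $|a_0|_p = 1$ eliminates the only terms that prevented Theorem~\ref{general power series} from yielding an exact interpolation. The only subtlety worth flagging is the appeal to the uniqueness and multiplicativity of the extended absolute value (Theorem~\ref{background_absolute value}(1)) to conclude that $|\prod_\beta \beta|_p = \prod_\beta |\beta|_p$ in the extension $K$.
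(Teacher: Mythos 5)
Your proposal is correct and follows the same route as the paper: the authors also note that, up to a sign (unit), $a_0 = \prod_\beta \beta$, so $|a_0|_p = 1$ together with $|\beta|_p \leq 1$ (from Proposition~\ref{properties}, roots of a monic polynomial over $\Z_p$ lie in $\mathcal{O}_K$) forces $|\beta|_p = 1$ for every root, which makes the discarded sum in the proof of Theorem~\ref{general power series} empty and the approximate twisted interpolation exact. Your explicit appeal to the multiplicativity of the extended absolute value is the same observation the paper leaves implicit.
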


If all roots of $g(x)$ satisfy $|\beta|_p = 1$, then we can extend $s(n)_{n \geq 0}$ to a two-sided sequence $s(n)_{n \in \Z}$ of $p$-adic integers satisfying Recurrence~\eqref{linear recurrence}.
In this case, Theorem~\ref{general power series} implies that $s(n) = s_{i,r}(n)$ for all $n \in \Z$ such that $n \equiv i \mod p^f - 1$ and $n \equiv r \mod q$.
Additionally, we obtain the following corollary.
We continue to assume the hypotheses of Theorem~\ref{general power series}.

\begin{corollary}
If $e < p - 1$ and all roots of $g(x)$ satisfy $\beta \equiv 1 \mod \pi$, then $s(n)$ can be interpolated to $\Z_p$.
\end{corollary}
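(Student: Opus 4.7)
The plan is to show that the hypotheses make the twisted interpolation constructed in Theorem~\ref{general power series} collapse into a single function defined on $\Z_p$, which by the definition in Section~\ref{general sequence} is precisely an interpolation.

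First I would unpack the consequences of the two hypotheses. From $e < p-1$, Equation~\eqref{definition of q} forces $q = 1$, so the index $r$ in the partition $\{A_{i,r}\}$ only takes the value $r = 0$ and the sets $A_{i,0}$ simply reduce to residue classes modulo $p^f - 1$. From $\beta \equiv 1 \bmod \pi$ for every root $\beta$ of $g(x)$, two things follow: each such $\beta$ is a unit, so $|\beta|_p = 1$, and the unique $(p^f-1)$-st root of unity congruent to $\beta$ modulo $\pi$ is $\omega(\beta) = 1$ (because $1$ is itself such a root of unity, and uniqueness is guaranteed by the remark following Proposition~\ref{enough roots}).

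Next I would substitute these facts into the formula
\[
    s_{i,r}(q x + r) = \sum_{|\beta|_p = 1} c_\beta(q x + r)\, \omega(\beta)^{i-r}\, \beta^r \exp_p\!\left(x \log_p \left(\tfrac{\beta}{\omega(\beta)}\right)^q\right)
\]
from the proof of Theorem~\ref{general power series}. With $q = 1$, $r = 0$, and $\omega(\beta) = 1$, the expression simplifies to
\[
    s_{i,0}(x) = \sum_{\beta} c_\beta(x)\, \exp_p(x \log_p \beta),
\]
where the sum now ranges over all roots $\beta$ of $g(x)$, since the condition $|\beta|_p = 1$ is satisfied by all of them. In particular, the right-hand side is manifestly independent of $i$, so all the functions $s_{i,0}$ coincide; call the common function $s_*\colon \Z_p \to K$.

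Finally I would check that this $s_*$ is a genuine interpolation, not merely an approximate one. Because no root $\beta$ of $g(x)$ satisfies $|\beta|_p < 1$, the error term
\[
    \left|\sum_{|\beta|_p < 1} c_\beta(n)\, \beta^n\right|_p
\]
in the proof of Theorem~\ref{general power series} is an empty sum and equals $0$, so $s(n) = s_*(n)$ holds exactly for every $n \in \N$. Since $\bigcup_i A_{i,0} = \N$ is dense in $\Z_p$ and $s_*$ is continuous, $s_*$ is the unique continuous extension of $s$, which is the required interpolation. There is no real obstacle beyond carefully tracking that the two hypotheses remove precisely the obstructions (the $\omega(\beta)$-twist and the vanishing summands at roots with $|\beta|_p < 1$) that prevent a single interpolating function in the general case.
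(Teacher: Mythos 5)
Your proof is correct and follows essentially the same route as the paper's: $e<p-1$ forces $q=1$ so $r=0$ is the only residue, $\beta\equiv 1 \bmod \pi$ gives $\omega(\beta)=1$ so the functions $s_{i,0}$ coincide, and since every root is a unit the error sum over $|\beta|_p<1$ is empty, making the interpolation exact. Your explicit remark about the empty error sum is a point the paper leaves to the preceding discussion, but it is the same argument.
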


\begin{proof}
Since $\beta \equiv 1 \mod \pi$, we have $\omega(\beta) = 1$.
It follows from Theorem~\ref{general power series} that, for a fixed $r$, the functions $s_{i,r}(q x + r)$ coincide for all $i$.
Since $e < p - 1$, we have $q = 1$, and therefore the only value of $r$ is $r = 0$, so $s(n) = s_{0,0}(n)$ for all $n \geq 0$.
\end{proof}

\begin{example*}
Let $p \geq 5$.
Let $s(0) = s(1) = 1$, and let $s(n + 2) = 2 s(n + 1) + (p - 1) s(n)$.
Then $e \leq 2 < p - 1$, and the roots of $x^2 - 2 x - (p - 1)$ are congruent to $1$ modulo $\pi$.
Therefore $s(n)_{n \geq 0}$ can be interpolated to $\Z_p$.
\end{example*}

Our next result tells us that Theorem~\ref{general power series} is the best that we can hope for.
\begin{theorem}\label{converse}

Let $p$ be a prime and $K$ a finite extension of $\Q_p$. Suppose that $B$ is a nonempty finite set of elements of $K$ such that $|\beta|_p<1$ for each $\beta \in B$. For each $\beta\in B$, let $c_\beta:\Z_p\rightarrow K$ be a continuous function.  
For $n\in \mathbb N$, define $s(n)=\sum_{  \beta\in B} c_\beta(n) \beta^n$, and 
suppose that there is a twisted interpolation of
 $s(n)_{n \geq 0}$ to $\Z_p$.
Then $s(n) = 0$ for all $n\geq 0$.
\end{theorem}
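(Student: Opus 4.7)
The plan is to exploit the tension between two facts: the assumption $|\beta|_p < 1$ forces $s(n) \to 0$ along the natural numbers (in the $p$-adic sense), while a twisted interpolation forces $s$ to be controlled by continuous functions on $\Z_p$ along the classes $A_j$. The only way both can hold simultaneously is for $s$ to vanish identically.

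First, I would observe that each $c_\beta$ is continuous on the compact set $\Z_p$, so $|c_\beta(n)|_p$ is bounded by some constant $M$ uniformly in $n$. Setting $\rho = \max_{\beta \in B} |\beta|_p < 1$, the ultrametric inequality gives
\[
	|s(n)|_p \leq M \rho^n \longrightarrow 0 \quad \text{as } n \to \infty \text{ in } \N.
\]
Let $\{(s_j, A_j) : j \in J\}$ be the given twisted interpolation, and fix $j \in J$ so that $A_j$ is dense in $r + q\Z_p$ for some $r$. I would then argue that for every point $x_0 \in r + q\Z_p$ there exists a sequence $(n_k)$ in $A_j$ with $n_k \to x_0$ in $\Z_p$ and $n_k \to \infty$ in $\N$. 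Indeed, density of $A_j$ in $r + q\Z_p$ implies that each $p$-adic ball $x_0 + p^k\Z_p$ meets $A_j$ in infinitely many points (a finite intersection would contradict density near any other point of the ball). Hence one can choose $n_k \in A_j \cap (x_0 + p^k\Z_p)$ with the $n_k$ pairwise distinct, which forces $n_k \to \infty$ in $\N$ after passing to a subsequence.

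Combining the two observations: since $s_j$ is continuous on $\Z_p$ and $s_j(n_k) = s(n_k)$ for each $k$,
\[
	s_j(x_0) = \lim_{k \to \infty} s_j(n_k) = \lim_{k \to \infty} s(n_k) = 0.
\]
Thus $s_j$ vanishes on $r + q\Z_p$, and in particular $s(n) = s_j(n) = 0$ for every $n \in A_j$. Since the finite family $\{A_j : j \in J\}$ partitions $\N$, this gives $s(n) = 0$ for all $n \geq 0$.

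I do not anticipate a serious obstacle; the main point that requires care is the existence of the sequence $(n_k)$ tending to $x_0$ $p$-adically while tending to $\infty$ in the usual order. This is essentially topological and follows from the fact that $r + q\Z_p$ has no isolated points, so any dense subset of it is infinite in every nonempty open subset. Everything else is just continuity of $s_j$ and the elementary bound on $|s(n)|_p$.
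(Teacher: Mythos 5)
Your proof is correct and follows essentially the same route as the paper: both exploit that $s(n)\to 0$ $p$-adically as $n\to\infty$ in $\N$ (boundedness of the continuous $c_\beta$ on the compact $\Z_p$ together with $|\beta|_p<1$), combined with continuity of $s_j$ and density of $A_j$ in $r+q\Z_p$. Your version is only marginally more direct, getting $s_j\equiv 0$ on all of $r+q\Z_p$ in one step via sequences in $A_j$ that converge $p$-adically while tending to infinity in $\N$, whereas the paper first shows vanishing on $\overline{A_j}\setminus A_j$ and then passes to points of $A_j$ by a second limit.
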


\begin{proof}
Let $(s_j, A_j)$ be a twisted interpolation for
  $s (n)_{n\geq 0}$.
Let $x\in \Z_p \setminus \mathbb N$, and define $k_n \in \mathbb N$ by $k_n = (x \bmod p^n)$. 
Fix $j$.
The closure $\overline A_j$ of each partition element $A_j$ in $\Z_p$ satisfies $\overline A_j =r+q \Z_p$ for some $0\leq r \leq  q-1$, where $q$ is as in Definition~\ref{definition}; this implies that there exists $x\in \Z_p\setminus \mathbb N$ such that $k_n\in A_j$ for sufficiently large $n$.  Now fix any such $x \in \overline A_j\setminus A_j$. As $n\rightarrow \infty, $ the continuity of $s_j$ implies that $s(k_n)=s_j(k_n)\rightarrow s_j(x)$. On the other hand,   as $n\rightarrow \infty$,  $\beta^{k_n}\rightarrow 0$  for each $\beta$, and we have that  $s(k_n)\rightarrow 0$. Thus $s_j$ is identically zero on $\overline A_j\setminus A_j$.  
Since $\overline A_j =r+q \Z_p$, if $k\in A_j$ then there exists a sequence of elements $(x_n)_{n \geq 0}$ in $ \overline A_j \setminus A_j$, such that $x_n\rightarrow k$,
and now continuity of $s_j$ again tells us that $s(k)=0$.   
\end{proof}

\begin{example*}
Consider the sequence defined by $s(n + 2) = 2 s(n)$ and $s(0) = s(1) = 1$.
Let $p = 2$.
The roots of the characteristic polynomial are $\pm \sqrt{2}$.
Since $\sqrt{2}$ is a uniformizer of $\Q_2(\sqrt{2})/\Q_2$, Theorem~\ref{converse} tells us that there is no twisted   interpolation of $s(n)_{n\geq 0}$ to $\Z_p$.
\end{example*}

\section{Limits and distribution of residues}\label{applications}

In this section we describe two applications of Theorem~\ref{general power series}.
The first concerns $p$-adic limits of subsequences of constant-recursive sequences, such as the limits suggested by Figure~\ref{Fibonacci limits}.
The second concerns the density of residues modulo $p^\alpha$ attained by a constant-recursive sequence.

Using the power series of Theorem~\ref{general power series}, we can compute limits of $s(n)$ along sequences of points in $A_{i,r}$.

\begin{corollary}\label{general limit}
Let $a, b \in \Z$ with $a \geq 1$.
Under the hypotheses of Theorem~\ref{general power series}, the limit $\lim_{n \to \infty} s(a p^{f n} + b)$ exists in $\Z_p$ and is equal to
\[
	\lim_{n \to \infty} s(a p^{f n} + b)
	= \sum_{|\beta|_p = 1} c_\beta(b) \omega(\beta)^a \beta^b
\]
In particular, the value of this limit is algebraic over $\Q_p$.
\end{corollary}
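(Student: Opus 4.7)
The plan is to identify, for large $n$, which partition element $A_{i,r}$ of the approximate twisted interpolation from Theorem~\ref{general power series} contains $a p^{fn} + b$, then exploit the fact that $a p^{fn} + b \to b$ in $\Z_p$ as $n \to \infty$ together with continuity of $s_{i,r}$.

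First, I would pin down the correct indices. Since $q$ is a power of $p$, for $n$ large enough that $p^{fn} \equiv 0 \bmod q$, we have $a p^{fn} + b \equiv b \bmod q$, so the relevant value of $r$ is $r \equiv b \bmod q$ with $0 \leq r \leq q-1$. Because $p^f \equiv 1 \bmod (p^f - 1)$, we also have $a p^{fn} + b \equiv a + b \bmod (p^f - 1)$, so $i \equiv a + b \bmod (p^f - 1)$ with $0 \leq i \leq p^f - 2$. Thus $a p^{fn} + b \in A_{i,r}$ for all sufficiently large $n$.

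Next, I would invoke the approximate interpolation bound $|s(m) - s_{i,r}(m)|_p \leq C D^m$ for $m \in A_{i,r}$ from Theorem~\ref{general power series}, which shows that as $n \to \infty$ the difference $s(a p^{fn} + b) - s_{i,r}(a p^{fn} + b)$ tends to $0$. Since $p^{fn} \to 0$ in $\Z_p$, the sequence $a p^{fn} + b$ converges to $b$ in $\Z_p$, so continuity of $s_{i,r}$ gives $\lim_{n \to \infty} s(a p^{fn} + b) = s_{i,r}(b)$.

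The main computation — and the only mildly delicate step — is evaluating $s_{i,r}(b)$ explicitly. Writing $b = q x_0 + r$ with $x_0 = (b-r)/q \in \Z$, the relation $x = \exp_p \log_p x$ valid on $1 + \mathcal{U}_K \cap \{|x-1|_p < p^{-1/(p-1)}\}$ (applied to $(\beta/\omega(\beta))^q$ via Lemma~\ref{general domain}) gives
\[
\exp_p\!\left(x_0 \log_p (\tfrac{\beta}{\omega(\beta)})^q\right) = \left(\tfrac{\beta}{\omega(\beta)}\right)^{b-r}.
\]
Plugging into the definition of $s_{i,r}$ and collecting powers yields $s_{i,r}(b) = \sum_{|\beta|_p = 1} c_\beta(b)\, \omega(\beta)^{i-b} \beta^b$, and since $\omega(\beta)$ is a $(p^f - 1)$-st root of unity and $i \equiv a + b \bmod (p^f - 1)$, we have $\omega(\beta)^{i-b} = \omega(\beta)^a$, giving the stated formula.

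Finally, algebraicity over $\Q_p$ is immediate: each $\beta$ is a root of $g(x) \in \Z_p[x]$, each $\omega(\beta)$ is a root of unity, and each $c_\beta(b)$ lies in $K$, so the entire sum lies in the finite extension $K/\Q_p$.
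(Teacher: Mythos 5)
Your proposal is correct and follows essentially the same route as the paper: identify the indices $i \equiv a+b \bmod (p^f-1)$, $r \equiv b \bmod q$, use the approximate-interpolation bound to pass to $s_{i,r}$, and conclude by continuity since $a p^{fn}+b \to b$ in $\Z_p$. Your explicit evaluation of $s_{i,r}(b)$ via $\exp_p \log_p$ just spells out a step the paper leaves implicit, and the algebraicity remark matches as well.
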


We note that if the coefficients $a_{\ell-1}, \dots, a_0$ in Recurrence~\eqref{linear recurrence} are integers and $s(n)_{n\geq 0}$ is integer-valued, then the limit above is algebraic over $\Q$.

\begin{proof}
For sufficiently large $n$, we have $a p^{f n} + b \equiv a + b \mod p^f - 1$ and $a p^{f n} + b \equiv b \mod q$. 
Therefore
\[
	| s(a p^{f n} + b) - s_{a+b, (b \bmod q)}(a p^{f n} + b) |_p
	\leq C \left(\max_{|\beta|_p \neq 1} |\beta|_p\right)^{a p^{f n} + b}.
\]
As $n \to \infty$, the right side of the inequality tends to $0$, so we have
\begin{align*}
	\lim_{n \to \infty} s(a p^{f n} + b)
	&= \lim_{n \to \infty} s_{a+b, (b \bmod q)}(a p^{f n} + b) \\
	&= s_{a+b, (b \bmod q)}(b) \\
	&= \sum_{|\beta|_p = 1} c_\beta(b) \omega(\beta)^a \beta^b
\end{align*}
by continuity of $s_{i,r}(q x + r)$.
\end{proof}

Corollary~\ref{general limit} is a generalization of the fact that if $\beta \in \Z_p \setminus p \Z_p$ then $\lim_{n \to \infty} \beta^{p^n} = \omega(\beta)$.
This can be seen since $s(n) = \beta^n$ satisfies the recurrence $s(n + 1) = \beta s(n)$ of order $1$.
For example, see~\cite{Rowland}.

It also follows from Corollary~\ref{general limit} that the sequence
\[
	\left(\lim_{n \to \infty} s(a p^{f n} + b)\right)_{b \geq 0}
\]
of $p$-adic integers satisfies Recurrence~\eqref{linear recurrence}, the original recurrence satisfied by $s(n)_{n \geq 0}$.
Other limits, such as $\lim_{n \to \infty} s(a_2 p^{2 f n} + a_1 p^{f n} + b)$, can be computed similarly.

If all roots of $g(x)$ satisfy $|\beta|_p = 1$, then we can relax the hypotheses of Corollary~\ref{general limit} and allow $a$ to be an arbitrary integer, obtaining the same conclusion.
Additionally, we have the following.

\begin{corollary}
Under the hypotheses of Theorem~\ref{general power series}, if $a \in (p^f - 1) \Z$ and all roots of $g(x)$ satisfy $|\beta|_p = 1$, then we obtain the integer limit
\[
	\lim_{n \to \infty} s(a p^{f n} + b)
	= s(b).
\]
\end{corollary}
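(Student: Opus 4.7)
The plan is to apply Corollary~\ref{general limit} directly and then recognize that the root-of-unity factor becomes trivial under the divisibility hypothesis on $a$. Since all roots of $g(x)$ satisfy $|\beta|_p = 1$, the text remarks just before this corollary that Corollary~\ref{general limit} applies to arbitrary $a \in \Z$ (not just $a \geq 1$), because under this hypothesis the sequence $s(n)_{n \geq 0}$ extends to a two-sided sequence $s(n)_{n \in \Z}$ via Recurrence~\eqref{linear recurrence}, and the expressions $\beta^{qn+r}$ remain in $\mathcal{O}_K$ for all integer exponents. So the starting point is
\[
	\lim_{n \to \infty} s(a p^{f n} + b) = \sum_{|\beta|_p = 1} c_\beta(b)\, \omega(\beta)^a\, \beta^b.
\]

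The key observation is that each $\omega(\beta)$ is a $(p^f - 1)$-st root of unity by Proposition~\ref{enough roots}. Therefore, whenever $a \in (p^f - 1)\Z$, we have $\omega(\beta)^a = 1$ for every root $\beta$ with $|\beta|_p = 1$. Consequently the limit simplifies to
\[
	\lim_{n \to \infty} s(a p^{f n} + b) = \sum_{|\beta|_p = 1} c_\beta(b)\, \beta^b.
\]

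The final step is to notice that, under the assumption that every root $\beta$ of $g(x)$ satisfies $|\beta|_p = 1$, the sum on the right-hand side ranges over \emph{all} roots of the characteristic polynomial, which is precisely the closed-form expression for $s(b)$. That is,
\[
	\sum_{|\beta|_p = 1} c_\beta(b)\, \beta^b = \sum_{\beta} c_\beta(b)\, \beta^b = s(b),
\]
which completes the argument. There is no substantive obstacle: the proof is essentially a one-line specialization of Corollary~\ref{general limit}, with the only point requiring care being the justification that $a$ is allowed to be any integer (including zero or negative values, for which the equality is trivial or makes sense only after extension to $\Z$).
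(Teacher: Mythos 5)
Your proposal is correct and follows essentially the same route as the paper: apply Corollary~\ref{general limit} (with $a$ allowed to be an arbitrary integer thanks to the remark preceding the statement), note that $\omega(\beta)^a = 1$ since each $\omega(\beta)$ is a $(p^f-1)$-st root of unity and $a \in (p^f-1)\Z$, and observe that the sum over roots with $|\beta|_p = 1$ is the sum over all roots, hence equals $s(b)$. The paper's proof is precisely this one-line computation, so there is nothing to add.
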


\begin{proof}
The computation in the proof of Corollary~\ref{general limit} shows that
\[
	\lim_{n \to \infty} s(a p^{f n} + b)
	= \sum_\beta c_\beta(b) \omega(\beta)^a \beta^b
	= \sum_\beta c_\beta(b) \beta^b
	= s(b). \qedhere
\]
\end{proof}

Our second application of the power series of Theorem~\ref{general power series} is
determining the value of the limiting density
\[
	\lim_{\alpha \to \infty} \frac{|\{s(n) \bmod p^\alpha : n \geq 0\}|}{p^\alpha}
\]
of attained residues.  This limit exists since the sequence of densities modulo powers of $p$ is a non-increasing sequence bounded below by $0$.
Let $\mu$ be the Haar measure on $\Z_p$ defined by $\mu(m + p^\alpha \Z_p) = p^{-\alpha}$.
For $f\geq 1$ and $q\geq1$ a power of $p$, recall the definition of the family of sets $A_{i,r}$ in Equation~\eqref{special_sets}.

\begin{theorem}\label{limiting density}
Let  $s(n)_{n\geq 0}$ be a sequence of $p$-adic integers with an approximate twisted interpolation $\{(s_{i,r},A_{i,r}): 0\leq i\leq p^f -2 \mbox{ and }0\leq r\leq q -1\}$.
Then 
\[
	\lim_{\alpha \to \infty} \frac{|\{s(n) \bmod p^\alpha : n \geq 0\}|}{p^\alpha}
	= \mu\!\left(\Z_p \cap \bigcup_{i,r} s_{i,r}(r + q \Z_p)\right).
\]
\end{theorem}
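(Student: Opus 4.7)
The plan is to convert the attained-residue density into a Haar measure, pass to the limit via a decreasing intersection, and then compare the resulting closure in $\Z_p$ with the set $T \colonequal \Z_p \cap \bigcup_{i,r} s_{i,r}(r + q\Z_p)$. For each $\alpha \geq 0$, let $R_\alpha \colonequal \{s(n) \bmod p^\alpha : n \geq 0\}$ and $S_\alpha \colonequal \bigcup_{n \geq 0} \bigl(s(n) + p^\alpha \Z_p\bigr)$. This is a disjoint union of $|R_\alpha|$ balls of $\mu$-measure $p^{-\alpha}$, so $\mu(S_\alpha) = |R_\alpha|/p^\alpha$. The $S_\alpha$ are nested, $S_{\alpha+1} \subseteq S_\alpha$, and each is closed in the compact space $\Z_p$, so continuity of $\mu$ from above gives
\[
\lim_{\alpha \to \infty} \frac{|R_\alpha|}{p^\alpha} = \mu\!\left(\bigcap_\alpha S_\alpha\right) = \mu\!\left(\overline{\{s(n) : n \geq 0\}}\right),
\]
where the closure is in $\Z_p$, since $\bigcap_\alpha S_\alpha$ is exactly the set of $x \in \Z_p$ lying within $p$-adic distance $p^{-\alpha}$ of some $s(n)$ for every $\alpha$.

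It remains to show $\mu\bigl(\overline{\{s(n) : n \geq 0\}}\bigr) = \mu(T)$. For the inclusion $T \subseteq \overline{\{s(n) : n \geq 0\}}$, fix $y = s_{i,r}(x) \in T$ with $x \in r + q\Z_p$. By Lemma~\ref{dense} the set $A_{i,r}$ is dense in $r + q\Z_p$, and since $A_{i,r}$ is an infinite arithmetic progression one can choose $n_k \in A_{i,r}$ with $n_k \to x$ in $\Z_p$ \emph{and} $n_k \to \infty$ in $\N$ simultaneously. Continuity of $s_{i,r}$ then gives $s_{i,r}(n_k) \to y$, while the approximation bound $|s(n_k) - s_{i,r}(n_k)|_p \leq C D^{n_k}$ with $D < 1$ forces $s(n_k) \to y$, placing $y$ in $\overline{\{s(n) : n \geq 0\}}$.

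For the other direction, suppose $y = \lim_k s(n_k)$. By pigeonhole we may assume $n_k \in A_{i,r}$ for a fixed pair $(i,r)$. If some subsequence of $(n_k)$ tends to infinity in $\N$, then by compactness of $r + q\Z_p$ a further subsequence converges in $\Z_p$ to some $x \in r + q\Z_p$, and the same continuity/approximation argument yields $y = s_{i,r}(x) \in T$. Otherwise $(n_k)$ is bounded, hence takes only finitely many values, so the convergent sequence $(s(n_k))$ is eventually constant and $y \in \{s(n) : n \geq 0\}$. Thus $\overline{\{s(n) : n \geq 0\}} \setminus T$ is contained in the countable set $\{s(n) : n \geq 0\}$, which has $\mu$-measure zero because Haar measure on $\Z_p$ is non-atomic, completing the desired equality.

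The main obstacle is precisely this ``bounded case'' in the reverse inclusion: strict set equality of the closure and $T$ need not hold, since the closure may contain isolated values $s(n_0)$ not lying in any image $s_{i,r}(r + q\Z_p)$, and the argument must rely on non-atomicity of Haar measure to absorb the resulting countable discrepancy.
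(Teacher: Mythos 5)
Your proof is correct, and it takes a genuinely different route from the paper's at the two main steps. Where the paper first replaces $s(n)$ by the auxiliary sequence $t(n) = s_{i,r}(n)$ for $n \in A_{i,r}$ and then proves that the limiting density of $\Z_p \cap \overline{t(\N)}$ modulo $p^\alpha$ equals its Haar measure by an explicit $\epsilon$-argument with finite unions of cylinder sets, you work directly with the nested clopen sets $S_\alpha = \bigcup_{n\geq 0}\bigl(s(n)+p^\alpha\Z_p\bigr)$, identify $\bigcap_\alpha S_\alpha$ with $\overline{s(\N)}$, and invoke continuity of $\mu$ from above; this is shorter, cleaner, and avoids the iteration in the paper's last step. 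More importantly, your handling of the ``bounded case'' is not just a technicality but a genuine refinement: the paper asserts the set identity $\overline{s(\N)} = \Z_p \cap \overline{t(\N)}$, which can fail when some terms of the approximate interpolation are only approximate --- for instance $s(n)=2^n$ in $\Z_2$ has $t \equiv 0$, so $\Z_2 \cap \overline{t(\N)} = \{0\}$ while $\overline{s(\N)} = \{2^n : n \geq 0\}\cup\{0\}$ --- and what is true, and suffices, is exactly your sandwich $T \subseteq \overline{s(\N)} \subseteq T \cup s(\N)$ together with the fact that countable sets are $\mu$-null; so your version actually repairs a small imprecision in the paper's argument, at the cost of needing non-atomicity of $\mu$, which is immediate. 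The only point you leave implicit is that $\mu(T)$ is defined: each $s_{i,r}(r+q\Z_p)$ is the continuous image of a compact set, hence compact and closed in $K$, so $T = \Z_p \cap \bigcup_{i,r} s_{i,r}(r+q\Z_p)$ is closed in $\Z_p$ and therefore Borel measurable (the paper records the analogous fact when it notes that $\bigcup_{i,r} s_{i,r}(r+q\Z_p)$ is compact); adding that one sentence makes your argument complete.
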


\begin{proof}
First, note that
\[
	|\{s(n) \bmod p^\alpha : n \geq 0\}|
	= \left|\overline{s(\N)} \bmod p^\alpha\right|.
\]
For $n\in A_{i,r}$, define $t(n)= s_{i,r}(n)$.
Let the extension $K$ and the constants $C, D$ be as in Definition~\ref{definition}.
 For all $n\geq 0$, we have $|s(n)-t(n)|_p\leq C D^n$,
so $\overline{s(\N)} = \Z_p \cap \overline{t(\N)}$.
Since $\frac{\left|\overline{s(\mathbb N)} \bmod p^\alpha\right|}{p^\alpha}$ and $\frac{\left|\Z_p \cap \overline{t(\mathbb N)} \bmod p^\alpha\right|}{p^\alpha}$ are non-increasing functions of $\alpha$, the limits exist and
\[
	\lim_{\alpha \to \infty} \frac{\left|\overline{s(\N)} \bmod p^\alpha\right|}{p^\alpha}
	= \lim_{\alpha \to \infty} \frac{\left|\Z_p \cap \overline{t(\N)} \bmod p^\alpha\right|}{p^\alpha}.
\]
Therefore it suffices to work with  $t(n)$. Note that in the case of a twisted interpolation, $s(n)=t(n)$ and we work in $\Z_p$, but in general $t(n)$ is an element of $\mathcal O_K$.

By Lemma~\ref{dense},  we have $r + q \Z_p=\overline{A_{i,r}} $  for each $i$ and $r$, so
\[
	\overline{t(\mathbb N)}
	= \bigcup_{i,r} \overline{s_{i,r}(A_{i,r})}
	= \bigcup_{i,r} s_{i,r}(r + q \Z_p).
\]

Note that  $\overline{t(\mathbb N)}= t(\Z_p)$ is  compact, and hence closed, in $K$.
It follows that $\Z_p \cap \overline{t(\mathbb N)}$ is closed in $\Z_p$.
The set $\mathcal \Z_p \setminus \overline{t(\mathbb N)}$ is open, so it is a countable union of cylinder sets, i.e.\ sets of the form $k + p^\beta \Z_p$, where $k \in \Z_p$.
Since $\Z_p$ is a countable union of cylinder sets, it follows that $\Z_p \cap \overline{t(\mathbb N)}$ is also a countable union of cylinder sets and is therefore measurable.
It follows that
\[
	\mu\!\left(\Z_p \cap \overline{t(\mathbb N)}\right)
	= \mu\!\left(\Z_p \cap \bigcup_{i,r} s_{i,r}(r + q \Z_p)\right).
\]

It remains to show that
\[
	\lim_{\alpha \to \infty} \frac{\left|\Z_p \cap \overline{t(\mathbb N)} \bmod p^\alpha\right|}{p^\alpha}
	= \mu\!\left(\Z_p \cap \overline{t(\mathbb N)}\right).
\]
Let
\[
	S_\alpha\colonequal\bigcup_{k\in \left(\Z_p \cap \overline{t(\mathbb N)} \bmod p^\alpha\right)} k+   p^\alpha \Z_p.
\]
Since $\Z_p \cap \overline{t(\mathbb N)} \subset S_\alpha$, it  follows that  $\frac{\left|\Z_p \cap \overline{t(\mathbb N)} \bmod p^\alpha\right|}{p^\alpha}
	\geq \mu\!\left(\Z_p \cap \overline{t(\mathbb N)}\right)$ for each $\alpha$,
 and so $\lim_{\alpha \to \infty} \frac{\left|\Z_p \cap \overline{t(\mathbb N)} \bmod p^\alpha\right|}{p^\alpha} \geq \mu\!\left(\Z_p \cap \overline{t(\mathbb N)}\right)$.

To establish the other inequality, we fix $\epsilon>0$ and we suppose that for some $\alpha$, 
	$ \frac{\left|\Z_p \cap \overline{t(\mathbb N)} \bmod p^\alpha\right|}{p^\alpha} > \mu\!\left(\Z_p \cap \overline{t(\mathbb N)}\right) + \epsilon$, i.e.\ $\mu\!\left(S_\alpha\setminus \overline{t(\mathbb N)} \right) > \epsilon$.
Since $S_\alpha\setminus \overline{t(\mathbb N)}$ is open, there exists a set  $T\subset  S_\alpha\setminus \overline{t(\mathbb N)}$, which is a finite union of cylinder sets, and whose $\mu$-mass is at least $\frac{\epsilon}{2}$.
There exists $\beta > \alpha$ such that $T$ is a union of cylinder sets all of which are of the form $k+   p^\beta \Z_p$. Then $\mu\!\left(S_\beta\right) \leq \mu\!\left(S_\alpha\right) - \frac{\epsilon}{2}$. If  $\mu\!\left(S_\beta\setminus \overline{s(\mathbb N)}\right) > \epsilon$,
we iterate this procedure until  we find a $\gamma$ with $\mu\!\left(S_\gamma \setminus \overline{s(\mathbb N)}\right) < \epsilon$, and hence 
$ \lim_{\alpha \to \infty} \frac{\left|\Z_p \cap \overline{t(\mathbb N)} \bmod p^\alpha\right|}{p^\alpha} \leq \mu\!\left(\Z_p \cap \overline{t(\mathbb N)}\right) +\epsilon$.
As this is true for any $\epsilon>0$, this completes the proof.
\end{proof}

We apply  Theorem \ref{limiting density} to  compute the
 limiting density of residues attained by the Fibonacci sequence modulo $11^\alpha$   in Theorem \ref{Fibonacci p=11}. We suspect that the method we use there generalizes to any $p$ and any constant-recursive sequence.

\section{The Fibonacci sequence}\label{Fibonacci}

In this section we apply the results from Sections~\ref{general sequence} and \ref{applications} to the Fibonacci sequence $F(n)_{n \geq 0}$, which satisfies
\[
	F(n + 2) - F(n + 1) - F(n) = 0
\]
with initial conditions $F(0) = 0$ and $F(1) = 1$.
Accordingly, we take $K = \Q_p(\phi)$, where $\phi$ is a root of $x^2 - x - 1$.
Let $\bar{\phi}$ be the other root.
Note that $\sqrt{5} \colonequal 2 \phi - 1 \in \Q_p(\phi)$, and in fact $\Q_p(\sqrt{5}) = \Q_p(\phi)$.
The ramification index of the extension $\Q_p(\phi)/\Q_p$ is as follows.

\begin{lemma}\label{root_5_extension}
Let $p$ be a prime, and let $d$ be the degree of the extension $\Q_p(\phi)/\Q_p$.
\begin{itemize}
\item If $p \equiv 1,4 \mod 5$, then $\phi\in \Q_p$, so $e = d = 1$.
\item If $p \equiv  2, 3 \mod 5$, then $\phi\not \in \Q_p$ and $e = 1$ and $d = 2$. 
\item If $p=5$,  then $\phi\not \in \Q_5$ and $e = d = 2$. 
\end{itemize}
\end{lemma}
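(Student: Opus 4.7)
The plan is to reduce the lemma to the question of when $5$ is a square in $\Q_p$. Using $\sqrt{5} = 2\phi - 1$ and $\phi = (1 + \sqrt{5})/2$ (valid in $\Q_p$ for every prime $p$), I get $\Q_p(\phi) = \Q_p(\sqrt{5})$, and $\phi \in \Q_p$ if and only if $5 \in (\Q_p^\times)^2$. I would then split into three cases according to whether $p$ is odd and coprime to $5$, equals $2$, or equals $5$, and determine $e$ either from the ramification of $\sqrt{5}$ or from the residue field.

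For odd $p \neq 5$, the element $5$ is a unit in $\Z_p$, and Hensel's lemma applied to $x^2 - 5$ (whose derivative $2x$ is a unit at any candidate root) gives that $5$ is a square in $\Q_p$ iff it is one modulo $p$. Since $5 \equiv 1 \pmod 4$, quadratic reciprocity yields $\bigl(\tfrac{5}{p}\bigr) = \bigl(\tfrac{p}{5}\bigr)$, and the nonzero squares modulo $5$ are $\{1,4\}$. Hence $\phi \in \Q_p$ iff $p \equiv 1, 4 \pmod 5$, giving the first bullet; otherwise $d = 2$. In the latter subcase the extension $\Q_p(\sqrt{5})$ is unramified because $x^2 - 5$ remains irreducible and separable over $\mathbb{F}_p$, so $f = 2$ and $e = 1$.

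For $p = 2$, I would bypass the reciprocity computation and reduce $x^2 - x - 1$ modulo $2$ to $x^2 + x + 1$, which is irreducible in $\mathbb{F}_2[x]$. Hence $\phi \notin \Q_2$, the residue field of $\Q_2(\phi)$ contains $\mathbb{F}_2[\bar\phi] = \mathbb{F}_4$, so $f = 2$ and therefore $e = d/f = 1$. This matches the second bullet since $2 \equiv 2 \pmod 5$.

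I expect the $p = 5$ case to be the main obstacle: modulo $5$ the polynomial factors as $(x-3)^2$, so Hensel's lemma does not apply directly and the reciprocity argument is unavailable. I would instead use the discriminant of $x^2 - x - 1$, which equals $5$: if $\sqrt{5}$ belonged to $\Q_5$ then it would have $5$-adic valuation $1/2$, impossible since $\nu_p$ on $\Q_5$ takes integer values. Hence $\sqrt{5} \notin \Q_5$ and $\phi \notin \Q_5$, forcing $d = 2$. In $\Q_5(\phi) = \Q_5(\sqrt{5})$, the element $\sqrt{5}$ satisfies $\nu_p(\sqrt{5}) = 1/2$ and so is a uniformizer, giving $e = 2$.
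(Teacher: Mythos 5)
Your proof is correct: the reduction to whether $5$ is a square in $\Q_p$ via $\Q_p(\phi)=\Q_p(\sqrt{5})$, the Hensel-plus-quadratic-reciprocity computation for odd $p\neq 5$, the residue-field argument at $p=2$, and the valuation argument $\nu_p(\sqrt{5})=\tfrac12$ at $p=5$ are all sound. The paper states this lemma without proof, treating it as the standard fact about how a prime $p$ behaves in $\Q(\sqrt{5})$, so your argument supplies exactly the expected justification; the only micro-step worth making explicit is that $\phi$ is integral (a root of a monic polynomial in $\Z_p[x]$), so it may legitimately be reduced modulo the maximal ideal in the $p=2$ case.
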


For $p = 5$ we take the uniformizer to be $\pi = \sqrt{5}$.
For other primes we take $\pi = p$.
Throughout this section, $e$ denotes the ramification index of $\Q_p(\phi)/\Q_p$, as determined in Lemma~\ref{root_5_extension}, and $f = d/e$.

For primes $p \neq 2$ we obtain the following.

\begin{theorem}\label{pnot5or2}
Let $p\neq 2$ be a prime, and let $0 \leq i \leq p^f - 2$.
Define the function $F_i : \Z_p \to \Z_p$ by
\[
	F_i(x)
	=  \sum_{m \geq 0} \frac{\left(\omega(\phi)^i - (-1)^m \omega(\bar{\phi})^i\right) \left(\log_p \frac{\phi}{\omega(\phi)}\right)^m}{m! \sqrt{5}} x^m,
\]
and let $A_i = \{n \geq 0 : n \equiv i \mod p^f - 1\}$.
Then $\{(F_i, A_i) : 0 \leq i \leq p^f - 2\}$ is a twisted interpolation of $F(n)_{n \geq 0}$ to $\Z_p$.
\end{theorem}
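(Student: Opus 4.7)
The plan is to apply Theorem~\ref{general power series} to Binet's formula $F(n)=(\phi^n-\bar\phi^n)/\sqrt 5$ and unwind the resulting twisted interpolation. First I verify the hypotheses. The characteristic polynomial $x^2-x-1$ has roots $\phi,\bar\phi$ with $\phi\bar\phi=-1$, so both roots are units in $\mathcal O_K$, giving $|\phi|_p=|\bar\phi|_p=1$. By Corollary~\ref{interpolations exist}, Theorem~\ref{general power series} then yields an exact (rather than merely approximate) twisted interpolation, and the sum in the formula constructed in its proof has no discarded terms. By Lemma~\ref{root_5_extension} we have $e\in\{1,2\}$, and $e<p-1$ for every prime $p\neq 2$; hence Lemma~\ref{general domain} permits the choice $q=1$. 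The partition sets accordingly reduce to $A_i=A_{i,0}=\{n\geq 0:n\equiv i\bmod p^f-1\}$, exactly as in the statement.

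Next I specialize the formula supplied by the proof of Theorem~\ref{general power series}, using $c_\phi(x)=1/\sqrt 5$ and $c_{\bar\phi}(x)=-1/\sqrt 5$:
\[
s_{i,0}(x)=\frac{1}{\sqrt 5}\!\left(\omega(\phi)^i\exp_p\!\left(x\log_p \tfrac{\phi}{\omega(\phi)}\right)-\omega(\bar\phi)^i\exp_p\!\left(x\log_p \tfrac{\bar\phi}{\omega(\bar\phi)}\right)\right).
\]
The crux of the argument is the identity $\omega(\phi)\omega(\bar\phi)=-1$. Since $p\neq 2$, the integer $p^f-1$ is even, so $-1$ belongs to the cyclic group of $(p^f-1)$-st roots of unity furnished by Proposition~\ref{enough roots}. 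The product $\omega(\phi)\omega(\bar\phi)$ is itself a $(p^f-1)$-st root of unity, and it is congruent to $\phi\bar\phi=-1$ modulo $\pi$; since distinct $(p^f-1)$-st roots of unity occupy distinct residue classes modulo $\pi$, the identity follows. Consequently $\frac{\phi}{\omega(\phi)}\cdot\frac{\bar\phi}{\omega(\bar\phi)}=\frac{-1}{-1}=1$, and additivity of $\log_p$ on $1+\mathcal U_K$ yields
\[
\log_p\tfrac{\bar\phi}{\omega(\bar\phi)} = -\log_p\tfrac{\phi}{\omega(\phi)}.
\]

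Substituting this relation into the second exponential produces a factor of $(-1)^m$ in the $m$-th Taylor coefficient; expanding and combining gives exactly
\[
s_{i,0}(x)=\sum_{m\geq 0}\frac{\left(\omega(\phi)^i-(-1)^m\omega(\bar\phi)^i\right)\!\left(\log_p\tfrac{\phi}{\omega(\phi)}\right)^{m}}{m!\sqrt 5}\,x^m=F_i(x),
\]
identifying $s_{i,0}$ with $F_i$. To confirm $F_i(\Z_p)\subseteq\Z_p$, observe that each coefficient is fixed by $\mathrm{Gal}(K/\Q_p)$, which swaps $\phi\leftrightarrow\bar\phi$ and $\omega(\phi)\leftrightarrow\omega(\bar\phi)$, negates $\sqrt 5$, and negates $\log_p(\phi/\omega(\phi))$; hence each coefficient lies in $\Q_p$. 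Since $A_i$ is dense in $\Z_p$ and $F_i(A_i)=F(A_i)\subseteq\Z$, continuity of $F_i$ then forces $F_i(\Z_p)\subseteq\Z_p$.

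The only non-routine step is the identification $\omega(\phi)\omega(\bar\phi)=-1$, where the hypothesis $p\neq 2$ is essential so that $-1$ is a $(p^f-1)$-st root of unity; everything else is bookkeeping on the power series produced by Theorem~\ref{general power series}.
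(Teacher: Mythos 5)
Your proposal is correct and follows essentially the same route as the paper: specialize Theorem~\ref{general power series} with $q=1$ (valid since $e<p-1$ for $p\neq 2$ by Lemma~\ref{root_5_extension}) to Binet's formula, use that $\omega(\phi)\omega(\bar{\phi})=-1$ because both sides are $(p^f-1)$-st roots of unity congruent modulo $\pi$, deduce $\log_p\frac{\bar{\phi}}{\omega(\bar{\phi})}=-\log_p\frac{\phi}{\omega(\phi)}$, and expand the exponentials. Your added verification that the coefficients lie in $\Q_p$ and that $F_i(\Z_p)\subseteq\Z_p$ is a correct supplement to a point the paper leaves implicit.
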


\begin{proof}
Since $p \neq 2$, we have $q = 1$ by Equation~\eqref{definition of q}.
We have
\[
	F(n)
	= \frac{\phi^n - \bar{\phi}^n}{\sqrt{5}}
\]
for each integer $n \geq 0$.
The roots of $x^2 - x - 1$ satisfy $|\phi|_p = |\bar{\phi}|_p = 1$.
By Theorem~\ref{general power series},
\[
	\frac{\omega(\phi)^i \exp_p\left(x \log_p \tfrac{\phi}{\omega(\phi)}\right) - \omega(\bar{\phi})^i \exp_p\left(x \log_p \tfrac{\bar{\phi}}{\omega(\bar{\phi})}\right)}{\sqrt{5}}
\]
defines an analytic function on $\Z_p$ which agrees with $F(n)$ on $A_i$.
Expanding the power series for $\exp_p$ gives
\[
	\sum_{m \geq 0} \frac{\omega(\phi)^i \left(\log_p \frac{\phi}{\omega(\phi)}\right)^m - \omega(\bar{\phi})^i \left(\log_p \frac{\bar{\phi}}{\omega(\bar{\phi})}\right)^m}{m! \sqrt{5}} x^m.
\]
We claim that $\log_p \frac{\bar{\phi}}{\omega(\bar{\phi})} = -\log_p \frac{\phi}{\omega(\phi)}$.
Since $p \neq 2$, we have $-1 = \phi \cdot \bar{\phi} \equiv \omega(\phi) \omega(\bar{\phi}) \mod \pi$; since $-1$ and $\omega(\phi) \omega(\bar{\phi})$ are both $(p^f - 1)$-st roots of unity, this implies $\omega(\phi) \omega(\bar{\phi}) = -1$.
Therefore
\[
	\log_p \tfrac{\phi}{\omega(\phi)} + \log_p \tfrac{\bar{\phi}}{\omega(\bar{\phi})} = \log_p 1 = 0,
\]
and 
\[
	F_i(x)
	= \sum_{m \geq 0} \frac{\left(\omega(\phi)^i - (-1)^m \omega(\bar{\phi})^i\right) \left(\log_p \frac{\phi}{\omega(\phi)}\right)^m}{m! \sqrt{5}} x^m. \qedhere
\]
\end{proof}

For $p = 2$ one can also state a version of Theorem~\ref{pnot5or2}, where there are $6$ functions in the twisted interpolation since $q = p = 2$.

For $p = 5$ it turns out that $F_i(x)$ simplifies somewhat, allowing us to interpolate a twisted Fibonacci sequence to $\Z_5$.
Define the $p$-adic hyperbolic sine by
\[
	\sinh_p(x) \colonequal \frac{\exp_p(x) - \exp_p(-x)}{2} = \sum_{m \geq 0} \frac{1}{(2 m + 1)!} x^{2 m + 1}.
\]

\begin{corollary}\label{Fibonacci 5}
Let $p = 5$.
The function $F(n)/\omega(3)^n$ can be extended to an analytic function on $\Z_5$, namely
\[
	\frac{2}{\sqrt{5}} \sinh_5 \! \left(x \log_5 \tfrac{\phi}{\omega(3)}\right).
\]
\end{corollary}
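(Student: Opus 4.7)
The plan is to deduce the corollary directly from Theorem~\ref{pnot5or2} by showing that, at $p=5$, the $p^{f}-1 = 4$ twisting functions $F_0, F_1, F_2, F_3$ collapse into a single closed form once we pull out the factor $\omega(3)^n$. First I would invoke Lemma~\ref{root_5_extension} to record that for $p=5$ we have $e = d = 2$, $f = 1$, and (by Equation~\eqref{definition of q}) $q = 1$, so Theorem~\ref{pnot5or2} applies and yields the four analytic functions $F_i$ for $0 \leq i \leq 3$.

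Next I would compute $\omega(\phi)$ and $\omega(\bar\phi)$ with uniformizer $\pi = \sqrt{5}$. Reducing modulo $\sqrt{5}$ gives $\phi \equiv \bar\phi \equiv \tfrac{1}{2} \equiv 3 \bmod \sqrt{5}$, since $2 \cdot 3 \equiv 1 \bmod 5$. By the uniqueness clause of Proposition~\ref{enough roots}, the Teichmüller lift is determined by its residue class mod $\pi$, so
\[
    \omega(\phi) = \omega(\bar\phi) = \omega(3),
\]
where $\omega(3) \in \Z_5$ is the $(p-1) = 4$th root of unity congruent to $3$ modulo $5$. As a sanity check consistent with the identity $\omega(\phi)\omega(\bar\phi) = -1$ derived in Theorem~\ref{pnot5or2}, note $3^2 \equiv -1 \bmod 5$, so $\omega(3)^2 = -1$ and $\omega(3)$ has order $4$.

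With this substitution the series for $F_i(x)$ in Theorem~\ref{pnot5or2} becomes
\[
    F_i(x) = \omega(3)^i \sum_{m \geq 0} \frac{\bigl(1 - (-1)^m\bigr)\bigl(\log_5 \tfrac{\phi}{\omega(3)}\bigr)^m}{m!\, \sqrt{5}}\, x^m,
\]
and the even-$m$ terms vanish, leaving only the odd-$m$ terms with coefficient $2$. Recognising the defining series of $\sinh_5$, this simplifies to
\[
    F_i(x) = \omega(3)^i \cdot \frac{2}{\sqrt{5}} \sinh_5\!\left(x \log_5 \tfrac{\phi}{\omega(3)}\right).
\]

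Finally I would observe that $\omega(3)^4 = 1$ so the factor $\omega(3)^i$ equals $\omega(3)^n$ whenever $n \equiv i \bmod 4$. Combining this with the twisted interpolation identity $F(n) = F_i(n)$ for $n \in A_i$ gives $F(n)/\omega(3)^n = \tfrac{2}{\sqrt{5}} \sinh_5(n \log_5 \tfrac{\phi}{\omega(3)})$ for every $n \geq 0$, so the right-hand side is the desired analytic extension to $\Z_5$. The only place that required care was identifying $\omega(\phi)$ with a scalar in $\Z_5$ rather than an element of the ramified extension — once that is done, the cancellation of even-$m$ terms is essentially automatic and the rest is a bookkeeping step.
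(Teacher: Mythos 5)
Your proposal is correct and follows essentially the same route as the paper: it reduces $\phi \equiv \bar\phi \equiv 3 \bmod \sqrt{5}$ to get $\omega(\phi) = \omega(\bar\phi) = \omega(3)$, observes that the even-index coefficients in the series of Theorem~\ref{pnot5or2} cancel, and recognizes the remaining odd part as $\tfrac{2}{\sqrt{5}}\sinh_5\!\left(x \log_5 \tfrac{\phi}{\omega(3)}\right)$ after pulling out $\omega(3)^i = \omega(3)^n$. The extra bookkeeping you include (verifying $e=d=2$, $f=1$, $q=1$, and the consistency check $\omega(3)^2=-1$) is accurate and merely makes explicit what the paper leaves to the reader.
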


\begin{proof}
One checks that $\phi \equiv \bar{\phi} \equiv 3 \mod \sqrt{5}$ in $\mathcal O_{\Q_5(\phi)}$, so that
$\omega(\phi) = \omega(\bar{\phi}) = \omega(3)$, and the coefficient of $x^m$ is $0$ for even $m$.
Therefore, for every integer $n$, we have
\[
	F(n)
	= \sum_{m \geq 0} \frac{2 \, \omega(3)^n \left(\log_5 \frac{\phi}{\omega(3)}\right)^{2 m + 1}}{(2 m + 1)! \sqrt{5}} n^{2 m + 1}
	= \frac{2 \, \omega(3)^n}{\sqrt{5}} \sinh_5 \! \left(n \log_5 \tfrac{\phi}{\omega(3)}\right). \qedhere
\]
\end{proof}

Bihani, Sheppard, and Young~\cite{Bihani--Sheppard--Young} similarly showed that $2^n F(n)$ can be extended to an analytic function on $\Z_5$, in this case by a hypergeometric series.

Since $\omega(3)^{5^n} = \omega(3)$ in $\Z_5$ for all $n \geq 0$, from Corollary~\ref{Fibonacci 5} we see that the coefficient of $x^0$ in the power series expansion of $\frac{2}{\sqrt{5}} \sinh_5 \! \left(x \log_5 \tfrac{\phi}{\omega(3)}\right)$ is $\lim_{n \to \infty} F(5^n) = 0$.
Moreover, the coefficient of $x^1$ is
\[
	\lim_{n \to \infty} \frac{F(5^n)}{5^n}
	= \frac{2 \, \omega(3)}{\sqrt{5}} \log_5 \tfrac{\phi}{\omega(3)},
\]
the $5$-adic digits of which comprise the diagonal stripes seen in Figure~\ref{Fibonacci limits}.
Other coefficients of this power series can be obtained as limits similarly.

Corollary~\ref{general limit} allows us to establish the other limits suggested by Figure~\ref{Fibonacci limits}.
For a prime $p$ and $a, b \in \Z$, we have
\[
	\lim_{n \to \infty} F(a p^{f n} + b)
	= \frac{\omega(\phi)^a \phi^b - \omega(\bar{\phi})^a \bar{\phi}^b}{\sqrt{5}}.
\]
For $p = 2$ one computes that $\lim_{n \to \infty} F(p^{2n})$ and $\lim_{n \to \infty} F(p^{2n+1})$ are equal to $\pm \sqrt{-\frac{3}{5}}$.
For $p = 11$ the limit $\lim_{n \to \infty} F(p^n)$ is a root of $5 x^2 + 5 x + 1$.

We now turn to an application of Theorem~\ref{limiting density}.
A number of authors have studied the distribution of residues of the Fibonacci sequence modulo $m$.
Burr~\cite{Burr} characterized the integers $m$ such that $(F(n) \bmod m)_{n \geq 0}$ contains all residue classes modulo $m$.
In particular, the Fibonacci numbers attain all residues modulo $3^\alpha$ and all residues modulo $5^\alpha$.

The limiting densities of attained residues modulo powers of other primes can be determined by Theorem~\ref{limiting density}.
We conclude the paper by determining the limiting density of residues for $p = 11$.
In this case, $f = d = 1$, so the twisted interpolation of the Fibonacci sequence to $\Z_{11}$ consists of $10$ functions  $F_0,\ldots , F_{9}$.
By Theorem~\ref{limiting density},
\[
	\lim_{\alpha \to \infty} \frac{|\{F(n) \bmod 11^\alpha : n \geq 0\}|}{11^\alpha}
	= \mu\!\left(\bigcup_{i = 0}^9 F_i(\Z_{11})\right).
\]
Therefore it suffices to determine $F_i(\Z_{11})$ for each $i$ in the interval $0 \leq i \leq 9$.

\begin{lemma}\label{Fibonacci p=11 i not 5}
Let $p = 11$, and let $0\leq i \leq 9$ such that $i\neq 5$.
Then $F_i(\Z_{11}) = (F(i) \bmod 11) + 11 \Z_{11}$.
\end{lemma}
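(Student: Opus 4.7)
The plan is to establish both inclusions separately. For the forward inclusion $F_i(\Z_{11}) \subset F(i) + 11\Z_{11}$, I would invoke the Pisano period of the Fibonacci sequence, which equals $10 = p^f - 1$ modulo $11$. Thus $F(n) \equiv F(i) \pmod{11}$ whenever $n \equiv i \pmod{10}$. Since $A_i$ is dense in $\Z_{11}$ by Lemma~\ref{dense} (with $q = 1$) and $F_i$ is continuous, this congruence extends to $F_i(x) \equiv F(i) \pmod{11}$ for every $x \in \Z_{11}$.

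For the reverse inclusion, I would analyze the auxiliary analytic function $G(x) \colonequal (F_i(x) - F(i))/11$. Using the power series of Theorem~\ref{pnot5or2}, the constant term $c_0$ satisfies $c_0 \equiv F(i) \pmod{11}$ because $\omega(\phi) \equiv \phi$ and $\omega(\bar{\phi}) \equiv \bar{\phi}$ modulo $11$, and every $c_m$ with $m \geq 1$ is divisible by $11$, since $v_{11}(c_m) \geq m - v_{11}(m!) \geq 1$. Hence $G$ lies in $\Z_{11}[[x]]$, and it suffices to show $G(\Z_{11}) = \Z_{11}$.

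The heart of the argument is that the linear coefficient of $G$ is a unit. Up to the unit $1/\sqrt{5}$, the coefficient $c_1$ is the product $(\omega(\phi)^i + \omega(\bar{\phi})^i)\log_{11}(\phi/\omega(\phi))$. The first factor reduces modulo $11$ to the $i$-th Lucas number $L_i$; direct evaluation gives $L_0, \dots, L_9 \equiv 2, 1, 3, 4, 7, 0, 7, 7, 3, 10 \pmod{11}$, so this factor is a unit in $\Z_{11}$ exactly when $i \neq 5$. For the second factor I need $v_{11}(\log_{11}(\phi/\omega(\phi))) = 1$, equivalently $\phi \nequiv \omega(\phi) \pmod{121}$. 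Granted this, the estimate $v_{11}(c_m/11) \geq m - 1 - v_{11}(m!) \geq 1$ for $m \geq 2$ shows that $G(x) \bmod 11$ is affine-linear with unit slope. Then Hensel's lemma, applied to $G(x) - y$ at any mod-$11$ preimage of a target $y \in \Z_{11}$, lifts that preimage to an element of $\Z_{11}$ solving $G(x) = y$, establishing surjectivity.

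The principal obstacle is the concrete verification $v_{11}(\log_{11}(\phi/\omega(\phi))) = 1$. Starting from $\phi \equiv 8 \pmod{11}$, Hensel-lifting $x^2 - x - 1 = 0$ yields $\phi \equiv 85 \pmod{121}$, while Hensel-lifting $x^{10} = 1$ at the same residue $8$ yields $\omega(\phi) \equiv 118 \pmod{121}$. Their difference is $-3 \cdot 11$ modulo $121$, so $v_{11}(\phi - \omega(\phi)) = 1$, and this forces $v_{11}(\log_{11}(\phi/\omega(\phi))) = 1$ via the defining series of the $p$-adic logarithm.
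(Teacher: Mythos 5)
Your proof is correct, but it takes a genuinely different route from the paper's. The paper determines $F_i(\Z_{11})$ exactly by decomposing $F_i(x) = \frac{1}{\sqrt{5}}\left(h_i(x) - (-1)^i h_i(x)^{-1}\right)$ with $h_i(x) = \omega(\phi)^i \exp_{11}\left(x \log_{11} \tfrac{\phi}{\omega(\phi)}\right)$: since $\exp_{11}$ maps $11\Z_{11}$ isomorphically onto $1 + 11\Z_{11}$, the image of $h_i$ is the coset $(\phi^i \bmod 11) + 11\Z_{11}$, and the image of that coset under $y \mapsto \frac{1}{\sqrt{5}}(y - (-1)^i y^{-1})$ is computed by Hensel's lemma applied to the quadratic $y^2 - \sqrt{5} z y - (-1)^i$, the derivative condition being precisely $\phi^i + (-1)^i\phi^{-i} = \phi^i + \bar{\phi}^i \nequiv 0 \bmod 11$, i.e.\ $i \neq 5$. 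You instead argue coefficientwise: the Pisano-period/density argument gives $F_i(\Z_{11}) \subseteq (F(i)\bmod 11) + 11\Z_{11}$, and writing $F_i = F(i) + 11G$ with $G$ a power series over $\Z_{11}$ whose coefficients tend to $0$ and whose reduction mod $11$ is affine with unit slope — which requires both the Lucas-number unit $\phi^i + \bar{\phi}^i$ and the exact valuation $\nu_{11}\!\left(\log_{11}\tfrac{\phi}{\omega(\phi)}\right) = 1$ — you obtain surjectivity of $G$ by Newton--Hensel applied to the analytic function $G(x) - y$; this is a legitimate use of Hensel's lemma for restricted power series, not just polynomials. Both proofs thus hinge on the same two facts, and your condition on the linear coefficient is exactly the paper's derivative condition in disguise. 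A merit of your version is that you actually verify the valuation of the logarithm (your computation $\phi \equiv 85$ and $\omega(\phi) \equiv 8^{11} \equiv 118 \bmod 121$, difference $-3\cdot 11$, is correct; you chose $\sqrt{5} \equiv 4$, $\phi \equiv 8 \bmod 11$ where the paper chooses $\sqrt{5} \equiv 7$, $\phi \equiv 4$, but the valuation is independent of that choice), a step the paper dispatches with ``one computes.'' What the paper's composition approach buys is exact control of the image at each stage, which is then reused directly in the $i = 5$ analysis of Lemmas~\ref{Fibonacci_i=5} and~\ref{Fibonacci_rest}, whereas your argument is more generic power-series reasoning and its forward inclusion via the Pisano period is pleasantly elementary.
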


\begin{proof}
We determine the set $F_i(\Z_{11})$ by decomposing $F_i(x)$ as the composition of two simpler functions.
Using $\omega(\phi) \omega(\bar{\phi}) = -1$ and $\log_{11} \frac{\bar{\phi}}{\omega(\bar{\phi})} = -\log_{11} \frac{\phi}{\omega(\phi)}$, we have from the proof of Theorem~\ref{pnot5or2} that
\begin{align*}
	F_i(x)
	&= \frac{\omega(\phi)^i \exp_{11}\left(x \log_{11} \tfrac{\phi}{\omega(\phi)}\right) - \omega(\bar{\phi})^i \exp_{11}\left(-x \log_{11} \tfrac{\phi}{\omega(\phi)}\right)}{\sqrt{5}} \\
	&= \frac{\omega(\phi)^i \exp_{11}\left(x \log_{11} \tfrac{\phi}{\omega(\phi)}\right) - (-1)^i \omega(\phi)^{-i} \exp_{11}\left(-x \log_{11} \tfrac{\phi}{\omega(\phi)}\right)}{\sqrt{5}} \\
	&= \frac{h_i(x) - (-1)^i h_i(x)^{-1}}{\sqrt{5}}
\end{align*}
where $h_i(x) = \omega(\phi)^i \exp_{11}\left(x \log_{11} \tfrac{\phi}{\omega(\phi)}\right)$.

One computes $\left|\log_{11} \tfrac{\phi}{\omega(\phi)}\right|_{11} = \frac{1}{11}$, so $\left(\log_{11} \tfrac{\phi}{\omega(\phi)}\right) \Z_{11} = 11 \Z_{11}$.
Since $\exp_{11}$ is an isomorphism from the additive group $11 \Z_{11}$ to the multiplicative group $1 + 11 \Z_{11}$, we have
\[
	h_i(\Z_{11}) =
	\omega(\phi)^i (1 + 11 \Z_{11}) = (\phi^i \bmod 11) + 11 \Z_{11}.
\]

It remains to show that the image of $(\phi^i \bmod 11) + 11 \Z_{11}$ under the function $y \mapsto \frac{1}{\sqrt{5}} (y - (-1)^i y^{-1})$  is $(F(i) \bmod 11) + 11 \Z_{11}$.
Let
\[
	z \in \left(\frac{\phi^i - (-1)^i \phi^{-i}}{\sqrt{5}} \bmod 11\right) + 11 \Z_{11} = (F(i) \bmod 11) + 11 \Z_{11}.
\]
We apply Hensel's lemma to show that there exists $y \in (\phi^i \bmod 11) + 11 \Z_{11}$ such that $\frac{1}{\sqrt{5}} (y - (-1)^i y^{-1}) = z$, or, equivalently, $y^2 - \sqrt{5} z y - (-1)^i = 0$.
From our choice of $z$, it is clear that $y_0 = \phi^i$ satisfies this polynomial equation modulo $11$.
Then we must check that $2 y_0 - \sqrt{5} z \nequiv 0 \mod 11$.
The ring $\Z_{11}$ contains two square roots of $5$; without loss of generality, choose $\sqrt{5} \equiv 7 \mod 11$.
Then $\phi \equiv 4 \mod 11$, so $2 y_0 - \sqrt{5} z \nequiv 0 \mod 11$ if and only if $2 \cdot 4^i - (4^i - (-1)^i 4^{-i}) \nequiv 0 \mod 11$, which is true since $i\neq 5$.
\end{proof}

\begin{figure}[h]
	\includegraphics[width=\textwidth]{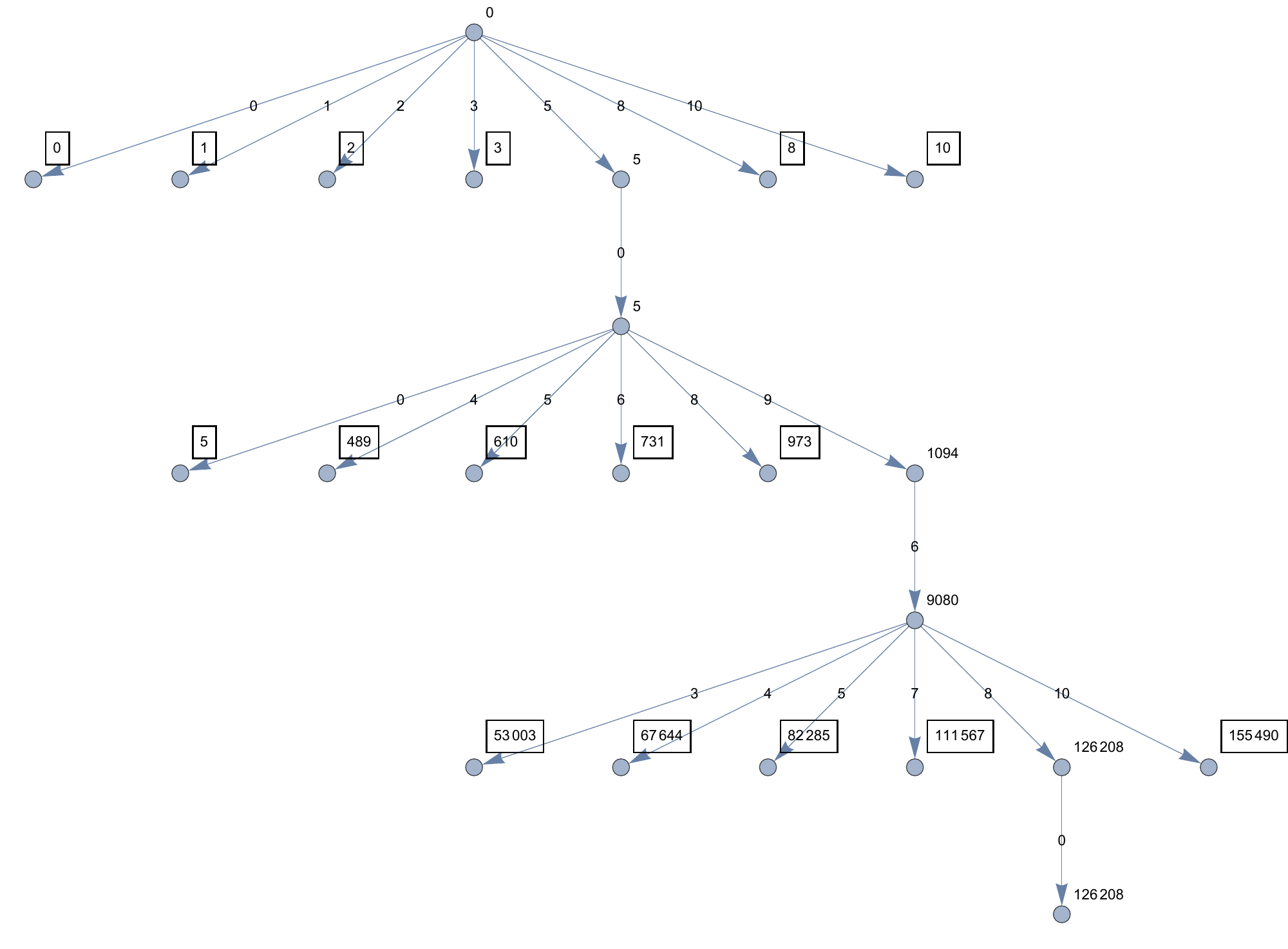}
	\caption{The tree of the residues attained by the Fibonacci sequence modulo small powers of $11$.}
	\label{Fibonacci tree}
\end{figure}

Figure~\ref{Fibonacci tree} shows the first several levels of the infinite rooted tree in which the vertices at level $\alpha$ consist of all residues $m$ modulo $11^\alpha$ such that $F(n) \equiv m \mod 11^\alpha$ for some $n \geq 0$.
Two vertices at consecutive levels $\alpha$ and $\alpha + 1$ are connected by an edge if the residue at level $\alpha + 1$ projects to the residue at level $\alpha$, and the edge is labeled with the extra base-$11$ digit in the residue at level $\alpha + 1$.
Framed residues represent full infinite $11$-ary subtrees: to simplify the diagram we suppress these full subtrees. 

It follows from Lemma~\ref{Fibonacci p=11 i not 5} that
\[
	\bigcup_{i\neq 5} F_i(\Z_{11}) = \bigcup_{ i\neq 5} (F(i) \bmod 11) + 11 \Z_{11} = \bigcup_{m \in \{0,1,2,3,8,10\}} m +11\Z_{11}.
\]
Accordingly, level $\alpha=1$ of Figure~\ref{Fibonacci tree} contains the residues $\{0,1,2,3,8,10\}$, and the outgoing edges from these vertices are suppressed since they consist of full $11$-ary subtrees.
Level $\alpha=1$ also contains the residue $5$; we will see that this residue has a unique residue modulo $11^2$ that projects onto it.

It remains to determine $F_5(\Z_{11})$.
We continue to choose $\sqrt{5} \equiv 7 \mod 11$.
We need to determine for which $z \in \Z_{11}$ the equation $y^2 - \sqrt{5} z y + 1 = 0$ has a solution in $\phi^5 + 11 \Z_{11} = 1 + 11 \Z_{11}$.
If $z = \frac{2}{\sqrt{5}}$, the equation becomes $(y - 1)^2 = 0$, which clearly has a solution in $1 + 11 \Z_{11}$.
Consequently, the tree in Figure~\ref{Fibonacci tree} contains an infinite path corresponding to the $11$-adic expansion of $\frac{2}{\sqrt{5}}$, and it is precisely along this path that more complicated branching occurs.

\begin{lemma}\label{Fibonacci_i=5}
Let $\alpha\geq 1$ and $j \in \{1, \dots, 10\}$.
Let $z \equiv \frac{2}{\sqrt{5}}   + j\, 11^{2\alpha} \mod 11^{2\alpha+1}$.
Let $f_z(y) \colonequal y^2 - \sqrt{5} z y + 1$.
\begin{itemize}
\item
If $j \in \{2, 6, 7, 8, 10\}$ then $f_z(y)$ has a root $y \in \Z_{11}$ satisfying $|y-1|_{11}<1/{11}^\alpha$.
\item
If $j \in \{1, 3, 4, 5, 9\}$ then $f_z(y)$ has no root in $\Z_{11}$.
\end{itemize}
\end{lemma}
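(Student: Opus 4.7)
My plan is to turn the existence of a root into a quadratic-residue test on the discriminant of $f_z(y) = y^2 - \sqrt 5\, z\, y + 1$, namely $5z^2 - 4$. This works because $\Z_{11}$ contains $\sqrt 5$ (as $11 \equiv 1 \pmod 5$), so the quadratic formula lives in $\Z_{11}$ whenever the discriminant is a square there.

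First I would expand $5z^2 - 4$ to order $11^{2\alpha+1}$. Writing $z = \tfrac{2}{\sqrt 5} + j \cdot 11^{2\alpha} + 11^{2\alpha+1} u$ with $u \in \Z_{11}$, the constant piece $5 \cdot \tfrac{4}{5} - 4$ cancels, the cross term contributes $4\sqrt 5\, j \cdot 11^{2\alpha}$, and the remaining pieces lie in $11^{2\alpha+1}\Z_{11}$ (using $\alpha \geq 1$, so $4\alpha \geq 2\alpha + 1$). Reducing modulo $11$ via $\sqrt 5 \equiv 7$ gives $4\sqrt 5 \equiv 6$, so
\[
5z^2 - 4 \equiv 6j \cdot 11^{2\alpha} \pmod{11^{2\alpha+1}}.
\]
In particular $\nu_{11}(5z^2 - 4) = 2\alpha$ with leading unit $\equiv 6j \bmod 11$.

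The next step is the squareness test in $\Z_{11}$. Because the valuation $2\alpha$ is even, $5z^2 - 4$ is a square in $\Z_{11}$ if and only if $6j$ is a nonzero quadratic residue modulo $11$. The set of nonzero QRs modulo $11$ is $\{1,3,4,5,9\}$, and a direct table of $6j \bmod 11$ for $j \in \{1,\dots,10\}$ shows the QR condition holds precisely for $j \in \{2,6,7,8,10\}$ and fails precisely for $j \in \{1,3,4,5,9\}$. This settles both bullet points: in the non-QR case the quadratic formula forces $y \notin \Z_{11}$; in the QR case both roots $y_\pm = (\sqrt 5 z \pm \sqrt{5z^2-4})/2$ lie in $\Z_{11}$ (their product is $1$ and their sum is $\sqrt 5 z \in \Z_{11}$).

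Finally, for the closeness of $y$ to $1$ in the QR case, note $\sqrt 5 z - 2$ has valuation $2\alpha$ while $\sqrt{5z^2-4}$ has valuation $\alpha$, so
\[
y - 1 = \frac{(\sqrt 5 z - 2) \pm \sqrt{5z^2-4}}{2}
\]
has valuation $\alpha$, which yields the stated bound. The main piece of bookkeeping is verifying that the higher-order tail of $z$ does not disturb the leading $6j \cdot 11^{2\alpha}$ term of the discriminant; this is automatic once one checks that $(11^{2\alpha+1})^2$ and $2 \cdot \tfrac{2}{\sqrt 5} \cdot 11^{2\alpha+1}$ are both absorbed into $11^{2\alpha+1}\Z_{11}$. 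After that, the remainder of the argument is a routine unpacking of the quadratic formula.
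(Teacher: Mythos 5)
Your argument is correct, and it takes a genuinely different route from the paper's. The paper proves the first bullet by applying its stated form of Hensel's lemma at the approximate root $a = 1 + 11^{\alpha} a'$, where $a'$ is chosen with $a'^2 \equiv \sqrt{5}\, j \pmod{11}$, and proves the second bullet by showing no such $a'$ exists; so its criterion is whether $\sqrt{5}\, j \equiv 7j$ is a nonzero quadratic residue mod $11$. You instead observe that solvability of $y^2 - \sqrt{5} z y + 1 = 0$ in $\Z_{11}$ is equivalent to the discriminant $5z^2 - 4$ being a square in $\Z_{11}$, compute $5z^2 - 4 \equiv 4\sqrt{5}\, j \cdot 11^{2\alpha} \pmod{11^{2\alpha+1}}$ (so the valuation is the even number $2\alpha$ with unit part $\equiv 6j$), and test whether $6j$ is a quadratic residue. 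The two criteria agree, since $4\sqrt{5}\, j$ and $\sqrt{5}\, j$ differ by the square factor $4$, and your residue table matches the paper's split of $\{1,\dots,10\}$. Your approach treats existence and non-existence uniformly, and as a bonus the same discriminant computation with an odd-position digit gives valuation $2\alpha - 1$, an odd number, which immediately yields Lemma~\ref{Fibonacci_rest} as well; the paper's Hensel route buys, in addition, uniqueness of the root near $1 + 11^{\alpha}a'$, which is not needed later. One caveat, shared with the paper's own proof rather than a defect of yours: both arguments produce a root with $\nu_{11}(y-1)$ exactly equal to $\alpha$ (in your computation because $\nu_{11}(\sqrt{5}z-2) = 2\alpha$ while $\nu_{11}(\sqrt{5z^2-4}) = \alpha$; in the paper's because $a' \nequiv 0 \bmod 11$), so the strict bound $|y-1|_{11} < 1/11^{\alpha}$ in the statement is not actually attainable and should be read as $|y-1|_{11} \leq 1/11^{\alpha}$, i.e.\ $y \equiv 1 \bmod 11^{\alpha}$; this weaker form is all that the application requires, namely that the root lies in $1 + 11\Z_{11}$.
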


\begin{proof}
We use the following version of Hensel's lemma: If there is an integer $a$ such that $|f_z(a)|_p < |f_z'(a)|_p^2$, then there is a unique $p$-adic integer $y$ such $f_z(y) = 0$ and $|y-a|_p < |f_z'(a)|_p$.

There are $6$ quadratic residues modulo $11$, namely $0$, $1$, $3$, $4$, $5$, and $9$.
If $j \in \{2, 6, 7, 8, 10\}$ then there exists $a' \in \{1, \dots, 10\}$ such that $a'^2 - \sqrt{5} j \equiv 0 \mod 11$.
We check that $a = 1 + 11^\alpha a'$ satisfies the conditions of Hensel's lemma.
We have
\begin{align*}
	f_z(a)
	&= a^2 - \sqrt{5} z a + 1 \\
	&\equiv a^2- \left(    2 + \sqrt{5} j \,11^{2\alpha} \right)      a+1 \mod 11^{2\alpha+1} \\
	&\equiv \left(a'^2 - \sqrt{5} j\right) 11^{2\alpha} \mod 11^{2\alpha+1} \\
	&\equiv 0 \mod 11^{2\alpha+1}.
\end{align*}
On the other hand, since $z\equiv   \frac{2}{\sqrt{5}} \mod 11^\alpha $ we have $f_z'(a) \equiv 2a - 2 \equiv 0 \mod 11^\alpha$, but $f_z'(a) \nequiv 0 \mod 11^{\alpha+1}$ since $a' \nequiv 0 \mod 11$.
It follows that
\[
	|f_z(a)|_{11} \leq \frac{1}{11^{2\alpha+1}} < \frac{1}{11^{2 \alpha}} = |f_z'(a)|_{11}^2.
\]

Since $z\equiv \frac{2}{\sqrt{5}} \mod 11^{2 \alpha}$, it follows that if $y^2-\sqrt{5}zy+1=0$ then $y \equiv 1 \mod 11^\alpha$.
Write $y = 1 + 11^\alpha a'$ for some $a' \in \Z_{11}$.
If $j \in \{1, 3, 4, 5, 9\}$ then $a'^2 - \sqrt{5} j \equiv 0 \mod 11$ has no solution in $a'$, so the computation above shows that $f_z(y) \nequiv 0 \mod 11^{2\alpha+1}$, which contradicts our assumption that $f_z(y) = 0$.
\end{proof}

\begin{lemma}\label{Fibonacci_rest}
Let $\alpha\geq 1$ and $j \in \{1, \dots, 10\}$.
Let $z \equiv \frac{2}{\sqrt{5}} + j \, 11^{2\alpha-1} \mod 11^{2\alpha}$. 
Then $f_z(y) \colonequal y^2 - \sqrt{5} z y + 1$ has no root in $\Z_{11}$.
\end{lemma}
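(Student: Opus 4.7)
The plan is to exploit a parity contradiction. I will derive the identity $(y-1)^2 = (\sqrt{5}z - 2)\, y$ from $f_z(y) = 0$, then take $11$-adic valuations of both sides and observe that the left-hand side has even valuation while the right-hand side has odd valuation $2\alpha - 1$.

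Suppose for contradiction that $y \in \Z_{11}$ satisfies $f_z(y) = 0$. First I would verify that $y$ is necessarily a $11$-adic unit: rearranging $f_z(y) = 0$ gives $y(\sqrt{5}z - y) = 1$, so $y$ divides $1$ in $\Z_{11}$. Next, subtracting $f_z(y) = 0$ from $(y-1)^2 = y^2 - 2y + 1$ produces the identity
\[
(y - 1)^2 = (\sqrt{5}z - 2)\, y.
\]
Taking $11$-adic valuations, the left-hand side equals $2\nu_{11}(y-1)$, which is even, while the right-hand side equals $\nu_{11}(\sqrt{5}z - 2)$ since $y$ is a unit. The hypothesis $z \equiv \tfrac{2}{\sqrt{5}} + j \cdot 11^{2\alpha-1} \mod 11^{2\alpha}$ gives $\sqrt{5}z - 2 \equiv \sqrt{5}j \cdot 11^{2\alpha-1} \mod 11^{2\alpha}$; since $\sqrt{5}j$ is a unit for $j \in \{1,\dots,10\}$, this forces $\nu_{11}(\sqrt{5}z - 2) = 2\alpha - 1$, which is odd. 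This is the desired contradiction.

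I do not anticipate any serious obstacle; the whole argument is a short parity observation once the key identity is in hand. The contrast with Lemma~\ref{Fibonacci_i=5} is illuminating: in that lemma the perturbation in $z$ sits at the \emph{even} valuation $2\alpha$, so $\nu_{11}(\sqrt{5}z - 2)$ is a legal valuation for a square in $\Z_{11}$ and a finer quadratic-residue analysis of $j$ is needed to separate the existence and non-existence cases; here the odd valuation $2\alpha - 1$ is automatically incompatible with $(y-1)^2$ being a square in $\Z_{11}$, so every $j \in \{1, \dots, 10\}$ is ruled out uniformly without any case split.
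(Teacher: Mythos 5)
Your argument is correct, and it reaches the contradiction by a slightly different route than the paper. The paper first deduces $y \equiv 1 \bmod 11^\alpha$ from $z \equiv \tfrac{2}{\sqrt{5}} \bmod 11^{2\alpha-1}$, substitutes $y = 1 + 11^\alpha a'$, and computes $f_z(y) \equiv -\sqrt{5}\, j\, 11^{2\alpha-1} \nequiv 0 \bmod 11^{2\alpha}$, a congruence calculation deliberately parallel to that of Lemma~\ref{Fibonacci_i=5}. You instead extract the identity $(y-1)^2 = (\sqrt{5}z - 2)\,y$ and compare valuations: since $y$ is a unit, $\nu_{11}(\sqrt{5}z-2) = 2\alpha-1$ is odd while $\nu_{11}\bigl((y-1)^2\bigr)$ is even, so no root exists. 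The underlying obstruction is the same in both proofs (the perturbation of $z$ sits at the odd level $2\alpha-1$, which cannot be matched by a square), but your packaging is shorter, avoids both the preliminary step $y \equiv 1 \bmod 11^\alpha$ and the substitution, and makes transparent the contrast with Lemma~\ref{Fibonacci_i=5}, where the perturbation at even level $2\alpha$ forces the quadratic-residue case analysis. One cosmetic point: to say the left side has even valuation you should first dispose of $y = 1$; but that is immediate, since the right side is nonzero (its valuation is the finite number $2\alpha-1$), so $y \neq 1$ and $\nu_{11}(y-1)$ is a genuine non-negative integer. The paper's version has the advantage of uniformity with the neighboring lemma; yours is the more economical self-contained argument.
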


\begin{proof}
The argument is similar to that of Lemma~\ref{Fibonacci_i=5}.
If $y^2-\sqrt{5}zy+1=0$ then $y \equiv 1 \mod 11^\alpha$.
Write $y = 1 + 11^\alpha a'$ for some $a' \in \Z_{11}$.
Then
\begin{align*}
	f_z(y)
	&= y^2 - \sqrt{5} z y + 1 \\
	&\equiv y^2 - \left(2 + \sqrt{5} j\,11^{2\alpha-1} \right) y + 1 \mod 11^{2\alpha} \\
	&\equiv -\sqrt{5} j \, 11^{2\alpha-1} \mod 11^{2\alpha} \\
	&\nequiv 0 \mod 11^{2\alpha},
\end{align*}
which contradicts our assumption that $f_z(y) = 0$.
\end{proof}

Lemmas~\ref{Fibonacci_i=5} and \ref{Fibonacci_rest} can be used to verify features of Figure~\ref{Fibonacci tree}. For example, letting $\alpha=1$ in Lemma~\ref{Fibonacci_rest} shows that the edge labeled $0$ is the only edge emanating from the residue $5$ modulo $11$ on level $\alpha = 1$.
The residue $5$ modulo $11^2$ on level $\alpha = 2$ has an emanating edge labeled $9$, since $\frac{2}{\sqrt{5}}\equiv 5+9 \cdot11^2 \mod 11^3$.
Letting $\alpha=1$ in Lemma~\ref{Fibonacci_i=5} shows that the other edges emanating from $5$ modulo $11^2$ are $9+\{2,6,7,8,10\} \bmod 11 = \{0,4,5,6,8  \}$.

\begin{theorem}\label{Fibonacci p=11}
The limiting density of residues attained by the Fibonacci sequence modulo $11^\alpha$ is
\[
	\lim_{\alpha \to \infty} \frac{|\{F(n) \bmod 11^\alpha : n \geq 0\}|}{11^\alpha}
	= \frac{145}{264}.
\]
\end{theorem}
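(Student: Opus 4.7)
The plan is to apply Theorem \ref{limiting density}, reducing the problem to computing $\mu\!\left(\bigcup_{i=0}^{9} F_i(\Z_{11})\right)$, and then to evaluate this measure by separating the contributions of $F_5$ and of the other nine interpolating functions. By Lemma \ref{Fibonacci p=11 i not 5}, each $F_i(\Z_{11})$ with $i \neq 5$ equals the cylinder $F(i) + 11 \Z_{11}$, and the six distinct residues obtained are $\{0, 1, 2, 3, 8, 10\}$, contributing measure $\tfrac{6}{11}$.

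My first key step is to show $F_5(\Z_{11}) \subseteq \tfrac{2}{\sqrt{5}} + 11^2 \Z_{11}$, which in particular lies in $5 + 11\Z_{11}$ and is thus disjoint from the six cylinders above. Following the decomposition used in the proof of Lemma \ref{Fibonacci p=11 i not 5}, I would write
\[
	F_5(x) = \frac{h_5(x) + h_5(x)^{-1}}{\sqrt{5}},
\]
noting that $(-1)^5 = -1$ and that $\omega(\phi)^5 = 1$ in $\Z_{11}$ (since $\omega(\phi) \equiv 4 \bmod 11$ and $4^5 \equiv 1 \bmod 11$, while $\omega(\phi)^5$ is a square root of unity). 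Then $h_5$ is a bijection from $\Z_{11}$ onto $1 + 11\Z_{11}$, and applying the identity $y + y^{-1} - 2 = (y-1)^2/y$ to $y = h_5(x)$ yields
\[
	F_5(x) - \tfrac{2}{\sqrt{5}} = \frac{(h_5(x) - 1)^2}{\sqrt{5}\, h_5(x)};
\]
since $\sqrt{5}$ and $h_5(x)$ are units in $\Z_{11}$, the right side has valuation $2\,\nu_{11}(h_5(x) - 1)$, so $\nu_{11}(F_5(x) - \tfrac{2}{\sqrt{5}})$ is either $\infty$ or an even integer at least $2$.

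The main step is then to pin down $F_5(\Z_{11})$ exactly using Lemma \ref{Fibonacci_i=5}. I claim that for each $\alpha \geq 1$, a point $z$ with $\nu_{11}(z - \tfrac{2}{\sqrt{5}}) = 2\alpha$ belongs to $F_5(\Z_{11})$ precisely when the leading $11$-adic digit of $z - \tfrac{2}{\sqrt{5}}$ lies in $\{2, 6, 7, 8, 10\}$. For the ``if'' direction, the bound $|y - 1|_{11} < 11^{-\alpha}$ in Lemma \ref{Fibonacci_i=5} forces $y \in 1 + 11\Z_{11} = h_5(\Z_{11})$, supplying a preimage of $z$ under $F_5$; for the ``only if'' direction, any $z \in F_5(\Z_{11})$ yields the root $y = h_5(x) \in \Z_{11}$ of $f_z$, which contradicts the second half of Lemma \ref{Fibonacci_i=5} when the leading digit lies in $\{1, 3, 4, 5, 9\}$. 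Together with the single point $z = \tfrac{2}{\sqrt{5}}$, this completely describes $F_5(\Z_{11})$.

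For each $\alpha \geq 1$ the five admissible leading digits give five pairwise disjoint cylinders each of measure $11^{-(2\alpha+1)}$, and summing the resulting geometric series gives
\[
	\mu(F_5(\Z_{11})) = \sum_{\alpha \geq 1} \frac{5}{11^{2\alpha+1}} = \frac{5}{11^3} \cdot \frac{1}{1 - 11^{-2}} = \frac{1}{264}.
\]
Adding the $\tfrac{6}{11} = \tfrac{144}{264}$ contribution of the other cylinders produces the claimed value $\tfrac{145}{264}$. The main technical obstacle is establishing the structural identity for $F_5(x) - \tfrac{2}{\sqrt{5}}$ cleanly and confirming that the roots produced by Lemma \ref{Fibonacci_i=5} actually lie in $h_5(\Z_{11})$, so that they certify membership in $F_5(\Z_{11})$ rather than merely being roots of $f_z$.
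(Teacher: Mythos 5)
Your proposal is correct and follows essentially the same route as the paper: reduce via Theorem~\ref{limiting density}, get measure $\tfrac{6}{11}$ from Lemma~\ref{Fibonacci p=11 i not 5}, and determine $F_5(\Z_{11})$ through solvability of $y^2-\sqrt{5}zy+1=0$ in $1+11\Z_{11}$ via Lemma~\ref{Fibonacci_i=5}, yielding $\mu(F_5(\Z_{11}))=\tfrac{1}{264}$ and the total $\tfrac{145}{264}$. The only (harmless) divergence is that your identity $F_5(x)-\tfrac{2}{\sqrt{5}}=\frac{(h_5(x)-1)^2}{\sqrt{5}\,h_5(x)}$ forces even valuation and so replaces the paper's appeal to Lemma~\ref{Fibonacci_rest} for excluding odd-valuation points.
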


\begin{proof}
It follows from Lemma~\ref{Fibonacci p=11 i not 5} that
\[
	\bigcup_{ i\neq 5   } F_i(\Z_{11}) = \bigcup_{m \in \{0, 1, 2, 3, 8, 10\}} m + 11 \Z_{11},
\]
which has measure $\frac{6}{11}$.
By Lemmas~\ref{Fibonacci_i=5} and \ref{Fibonacci_rest}, $F_5(\Z_{11})$ is a subset of $5 + 11^{2} \Z_{11}$ and so is disjoint from $F_i(\Z_{11})$ for each $i \neq 5$.
Moreover, it follows from these lemmas that $\mu(F_5(\Z_{11})) = \sum_{\alpha=1}^\infty \frac{5}{11^{2\alpha+1}} = \frac{1}{264}$, so that 
\[
	\mu\!\left(\bigcup_{i = 0}^9 F_i(\Z_{11})\right)
	= \frac{145}{264}. \qedhere
\]
\end{proof}

\section*{Acknowledgments}
The authors  thank Val\'erie Berth\'e for helpful discussions.
The second author thanks LIAFA, Universit\'e Paris-7 for 
its hospitality and support.

\end{document}